\newtheorem{thm}{Theorem}[section]
\newtheorem{thm*}{Theorem*} 
\newtheorem{cor}[thm]{Corollary}
\newtheorem{lem}[thm]{Lemma}
\theoremstyle{definition}
\newtheorem{defn}{Definition}[section]
\newtheorem{rem}{Remark}[section]
\numberwithin{equation}{section} \theoremstyle{remark}
\def\P{\mathbb{P}}
\def\e{\epsilon}
\begin{document}

\title{ 
Ergodicity of stochastic Cahn-Hilliard equations with logarithmic 
potentials driven by degenerate or nondegenerate noises}
\author{
\sc{Ludovic Gouden\`ege}\footnote{E-mail: ludovic.goudenege@math.cnrs.fr} \  \sc{and} \
 \sc{Bin Xie}\footnote{ E-mail:
 bxie@shinshu-u.ac.jp; bxieuniv@outlook.com}\\
{\small $^{\ast}$ F\'ed\'eration de Math\'ematiques de CentraleSup\'elec - CNRS FR3487}\\
{\small 3 rue Joliot Curie, 91190 Gif-sur-Yvette, France}\\
{\small $^\dagger$ Department of Mathematical Sciences, Faculty of
Science, Shinshu University }\\
{ \small{3-1-1 Asahi, Matsumoto, Nagano
390-8621, Japan } } }
\date{}
\maketitle

%%%%%%%%%%%%%%%%%%%%%%%%%%%%%%%%%%%%%%%%%%%%%%%%%%%%%%%%%%%%%%%%
%%%%%%%%%%%%%%%%%%%%%%%%%%%%%%%%%%%%%%%%%%%%%%%%%%%%%%%%%%%%%%%%

\begin{abstract}
We study the asymptotic  properties of the stochastic Cahn-Hilliard equation 
with the logarithmic free energy by establishing different 
dimension-free Harnack inequalities according to various kinds of noises.
 The main characteristics of this equation  are the singularities of
the logarithmic free energy at $1$ and $-1$ and the conservation of the mass of the solution in its spatial variable.  Both the space-time  colored noise and
the  space-time  white  noise are considered.  For the highly degenerate space-time  colored  noise, the asymptotic log-Harnack 
inequality is established under the so-called essentially elliptic conditions.
And the 
Harnack inequality with power is established for non-degenerate space-time  white  noise.
\end{abstract}

%%%%%%%%%%%%%%%%%%%%%%%%%%%%%%%%%%%%%%%%%%%%%%%%%%%%%%%%%%%%%%%%%%
%%%%%%%%%%%%%%%%%%%%%%%%%%%%%%%%%%%%%%%%%%%%%%%%%%%%%%%%%%%%%%%%%%

{\textbf{Keywords:} Stochastic Cahn-Hilliard equation,  asymptotic log-Harnack inequality, Harnack inequality with power, logarithmic free energy, essentially elliptic condition.}\\

\textit{2010 Mathematics Subject Classification:} Primary 60H15, 60H10; Secondary 60H07, 37L40, 47G20. 

%%%%%%%%%%%%%%%%%%%%%%%%%%%%%%%%%%%%%%%%%%%%%%%%%%%%%%%%%%
%%%%   Introduction
%%%%%%%%%%%%%%%%%%%%%%%%%%%%%%%%%%%%%%%%%%%%%%%%%%%%%%%%%%
\section{Introduction} \label{sec-1}

The Cahn-Hilliard equation is initially introduced to describe the phase 
separation in a binary alloy comprising two species when the temperature is quenched from high temperature to low one 
\cite{Cah-61, CaHi-58, CaHi-71} and now  plays a very important role in material 
science, tumor growth, population dynamics, thin films and so on. 
The deterministic Cahn-Hilliard equation has been extensively 
studied after \cite{CaHi-58}, see \cite{Lan-71, NoSe-84} for 
the case of  the polynomial free energy and \cite{BlEl-91, DeDe95}
for the case of the logarithmic free energy (see \eqref{eq-1.1} below for such energy). 
The phase separation, spinodal decomposition and nucleation are also
studied by many researchers, see \cite{BaFi-93,BlMa-01, BlMa-08, MaWa-98, NoSe-84} for instance. We refer the reader 
to \cite{Mi-17} and references therein for more studies on the deterministic case.
  
On the other hand, in the presence of thermal fluctuations,  a noise 
term is naturally required and now the stochastic Cahn-Hilliard equation 
is commonly  accepted for modeling. There are many articles which have
been devoted to the mathematical study of 
the stochastic Cahn-Hilliard equation with a polynomial free energy \cite{AnFa-18, AnKa-16, 
Car-01, DaDe-96}.  On the other hand, in applications, the solution of the Cahn-Hilliard equation is explained as  the rescaled density of atoms 
or concentration of one of material's components which takes values
in $[-1,1]$.  The polynomial free energy can not ensure that the solution
satisfies the constraint and usually the logarithmic free energy can remedy such problem. 
However, different from the deterministic case 
\cite{DeDe95}, for the stochastic case, owing to the impact of noise,
the logarithmic free energy is not strong enough to 
prevent the solution from exiting $[-1,1]$. To study it, reflection 
measures are required, see \cite{DeGo-11, GuMa-15}. 

From now on, let us introduce the stochastic 
Cahn-Hilliard equation with the logarithmic free energy.
Let  $\lambda  >0$ and define $f$ by
\begin{align}\label{eq-1.1}
f(u) =\left\{
\begin{aligned}
&+\infty,  & u\leq -1,\\
& \log\left( \frac{1-u}{1+u} \right)+ \lambda u, & u \in (-1,1), \\
& -\infty, &  u\geq 1.
\end{aligned}
\right. 
\end{align}
Let $(W(t))_{t \geq 0}$ be a cylindrical Wiener process on a completed probability space 
$(\Omega, \mathcal{F}, \mathbb{P})$.
We formally consider the stochastic Cahn-Hilliard equation with singular nonlinearity and double reflections
\begin{align}\label{sch-1}
\left\{
\begin{aligned}
\frac{\partial u}{\partial t}(t, \theta)= 
& -\frac{1}{2} \frac{\partial^2 }{ \partial \theta^2}
\left\{ \frac{\partial^2 u }{ \partial \theta^2}(t, \theta) + f(u(t,\theta ))  +\eta_-(t, \theta)  - \eta_+(t, \theta) \right\} \\
& \qquad + B\dot{W}(t, \theta), \ \  t>0, \ \theta\in (0, 1),\\
u(t,0) = & u(t,1)=  \frac{\partial^3 u}{\partial x^3} (t,0)= \frac{\partial^3 u}{\partial x^3} (t,1)=0,\    t\geq 0,\\
\int_0^t\int_0^1 (1 & +u(t,\theta))\eta_- (dt d\theta) 
=\int_0^t\int_0^1 (1-u(t,\theta))\eta_+(dt d\theta) =0, \\
u(0,\theta) = &x(\theta),\   \theta\in(0, 1),
\end{aligned}
\right.
\end{align}
where the solution $u(t, x)  \in  [-1,1] \  a.s.,$ is usually 
explained as the concentration of one species with respect to 
the other, $\eta_-, \eta_+$ are two non-negative random measures and $B$ denotes some operator which be stated clearly in Sections 
\ref{sec-2} and \ref{sec-3} respectively.
It is well-known that the Cahn-Hilliard equation can  be regarded as a gradient system in 
$H^{-1}(0,1)$ with the logarithmic free energy, which is called
Ginzburg-Landau free energy 
\begin{align*}
\mathcal{E}(u) =\int_0^1 \left(  \frac12|\nabla u( \theta) |^2 + F(u( \theta))\right)d \theta,
\end{align*}
where $F$ denotes the primitive of $-f$ with $F(0) =0$, that is $$F(u)=(1+u)\log(1+u) +(1-u)\log(1-u) -\frac{\lambda}{2} u^2, \ u\in (-1,1).$$
Note that for $\lambda >2$, $F$ denotes a double well potential, which is important 
in application. 

The stochastic PDE with reflecting measures like \eqref{sch-1}  is one kind of 
random obstacle problems \cite{Zam-17}, which is first studied for 
stochastic reaction-diffusion equations \cite{DoPa-93}. Such 
equation has been used to model the fluctuations for 
$\nabla \phi$ interface models on a hard wall with or without 
conservation of the area \cite{FuOl-01, Zam-08} and hence it has
attracted many researchers' attention. But, different from the  stochastic reaction-diffusion equation, 
due to the lack of the maximum principle
for the double Laplacian, there are few researches on stochastic 
Cahn-Hilliard equations with reflecting terms, see \cite{DeZa-07} 
for the case without  nonlinear term $f$, \cite{Gou-09} for the case of logarithmic and negative 
power nonlinear terms with only one reflection at $0$. 
The stochastic Cahn-Hilliard equation \eqref{sch-1}, the main object of this paper,  is studied mainly in \cite{DeGo-11} and \cite{GuMa-15}
for different noises.  

Roughly speaking, the main goal is to establish various dimension-free Harnack 
inequalities for the Markov semigroup associated with \eqref{sch-1} driven by two kinds of noises
and then study ergodic behavior of the solution and others properties.  
The dimension-free Harnack inequality is initially introduced in \cite{Wan-97} by F.-Y. Wang to study the log-Sobolev inequality of a diffusion process on Riemannian manifolds
and then it becomes as a very powerful and effective tool to the 
study  of various important properties of diffusion semigroups or semigroup relative to stochastic (functional) partial differential equations, such 
as, Li-Yau type heat kernel bound,
hypercontractivity, ultracontractivity,   strong Feller 
property, estimates on the heat kernels and Varadhan 
type small time asymptotics \cite{BaWaYu-15-1, DaRoWa-09, Liu-09, RoWa-03, Wan-07, Wan-13, Zh-10}.

Although recently dimension-free Harnack inequalities and their applications have also been studied 
for stochastic  reaction-diffusion equations with reflections \cite{NiXi-19, Xie-19, Zh-10}, to our best knowledge, there is no
publications on stochastic Cahn-Hilliard equations.  
Therefore, in the paper, we intend to the  
study on  the  dimension-free Harnack 
inequalities for  the Markov semigroup generated  by the solutions  of \eqref{sch-1}  perturbed by two different noises
Then we study other important  properties of the Markov semigroup
 obtained as corollary of Harnack inequalities
 % and then to  other 
%important  properties of the Markov semigroup as their applications. 

According to the characteristics  of  noises, both the  asymptotic log-Harnack inequality and the Harnack inequality with power will  be considered.
More precisely, we first study the asymptotic log-Harnack inequality 
 for the Markov semigroup relative to \eqref{sch-1} driven by  
the highly degenerate colored noise under the so-called essentially elliptic conditions, see \cite{GuMa-15} and \cite{HaMa-06}. 
The asymptotic log-Harnack inequality is initially introduced in \cite{Xu-11} with an application to stochastic 2D Navier-Stokes equations. The most important property of 
the asymptotic log-Harnack inequality is that the asymptotic strong Feller property introduced in \cite{HaMa-06} can be deduced from it. Hence, it has been established for various stochastic (partial)
differential equations, see \cite{BaWaYu-19} for stochastic systems with infinite memory  and see \cite{LiLi-19} for 3D Leray-$\alpha$ model. 

However, as far as we know, 
there is no publication on  the asymptotic log-Harnack inequality
for stochastic Cahn-Hilliard equations like \eqref{sch-1}, even for
stochastic reaction-diffusion equations with reflections.
Since the degenerate noise is considered, 
as explained in  \cite{GuMa-15} and \cite{HaMa-06}, 
it seems impossible to obtain the strong Feller property. 
On the other hand, it is well-known that the log-Harnack inequality or
the Harnack inequality with power implies the strong Feller property, 
see Theorem 1.4.1 \cite{Wan-13}. 
Therefore, it seems impossible for us to establish the log-Harnack 
inequality in this case, and also the Harnack inequality with power. Instead of such strong  inequalities, we will show the Markov semigroup associated with 
\eqref{sch-1} satisfies the asymptotic log-Harnack inequality, which 
is a weaker version of dimension-free Harnack inequalities.
Although the asymptotic strong Feller property for \eqref{sch-1} has been proved in \cite{GuMa-15}, we give a new proof of  the asymptotic strong Feller property  under a weaker 
condition and cover partially  the corresponding result obtained in  \cite{GuMa-15}.

The second purpose of this paper is to establish  the Harnack inequality with power 
and then in particular,  the log-Harnack inequality,  for the Markov semigroup corresponding to \eqref{sch-1} with $B=(-\Delta)^{\frac12}$. It is known that  in this case,
the average of the solution $u(t)$ in its spatial variable  is conservative  in time \cite{DeGo-11}. But, the conservation
of the average makes it  more difficult to investigate the dimension-free Harnack inequalities via coupling by change of measures than
the well-studied cases of 
stochastic  partial differential equations drive by additive noises \cite{Liu-09, NiXi-19, Wan-07, Wan-17,  Zh-10}. To overcome it,
we make use of  the strategy initially  introduced in \cite{Wan-11}, in which the stochastic finite  differential equation driven by multiplicative noise is investigated.

Let us now introduce some notations, which will be used throughout this paper. 
We denote by $\langle \cdot, \cdot \rangle$ and $|\cdot |$ the canonical  inner product and the norm of $L^2(0,1)$ respectively. 
Let $A$ be the realization of 
$\frac{\partial^2}{\partial \theta^2}$ with homogeneous Neumann boundary condition  
in $L^2(0,1)$, that is, $Au =  \frac{\partial^2 u}{\partial \theta^2}$ for any 
$u \in D(A):= \{u\in H^2(0,1): \ u'(0)=u'(1)=0 \}$.
  It is known that $A$ is self-adjoint in $L^2(0,1)$ with a complete orthonormal system 
$\{e_n \}_{n=0}^\infty$ in $L^2(0,1)$, which satisfies
$e_0(\theta) \equiv 1, e_n(\theta) =\sqrt{2} \cos(n \pi \theta), n=1,2, \cdots$ and $Ae_n= -(n\pi)^2 e_n,  n=0, 1, \cdots.$

For any $\gamma \in \mathbb{R}$, let us define 
$(-A)^{\frac{\gamma}{2} } u= \sum_{n=1}^\infty (n \pi)^{\gamma} u_n e_n$ 
for any  $u=\sum_{n=0}^\infty u_n e_n$ with its domain
$$V_\gamma =D\left((-A)^{\frac{\gamma}{2}} \right):= 
\left\{ u=\sum_{n=0}^\infty u_ne_n: \ 
\sum_{n=0}^\infty (n\pi)^{2\gamma}u_n^2<\infty  \right\}.
$$ It will be endowed with the norm $\|u\|_{\gamma}=( |u|_{\gamma}^2 + \bar{u}^2)^\frac12$.
Hereafter,  $\bar{u}$ denotes the average of $u$, that is 
$\bar{u}=\langle u, e_0 \rangle$, and $|u|_{\gamma}$ denotes the
seminorm, that is,
 $|u|_{\gamma}=\left|(-A)^{\frac{\gamma}{2}}u \right|
= \left(\sum_{n=1}^\infty (n\pi)^{2\gamma}u_n^2 \right)^\frac12.$ 
In addition, we will set
 $(u,v)_\gamma = \langle (-A)^{\frac{\gamma}{2}}u, (-A)^{\frac{\gamma}{2}}v \rangle$, which is  the semiscalar product.
For simplicity of notation, we set ${\bf H}=V_{-1}$ throughout this paper. 
Let us also denote by ${\bf H}^c$ the affine space ${\bf H}^c=\{u\in {\bf H}:\  \bar{u}=c\}$. It is easy to check that ${\bf H}^c$ is a Polish space with the metric inherited form ${\bf H}$. 

The remainder of this paper is organized as follows.
In Section \ref{sec-2}, the asymptotic log-Harnack inequality for \eqref{sch-1} driven by 
highly degenerate noise is established by using the 
asymptotic coupling method and as its application, the asymptotic heat kernel estimate and the asymptotic irreducibility are mainly stated.
In Section \ref{sec-3}, the Harnack inequality with power and the 
log-Harnack inequality  for \eqref{sch-1} with $B=(-\Delta)^{\frac12}$ are obtained  and some important applications also
are described as example.

\section{Asymptotic log-Harnack inequality for the case of highly degenerate colored noise} 
\label{sec-2}
In this section, we intend to establish the asymptotic 
log-Harnack inequality relative to \eqref{sch-1} driven by  highly degenerate colored noise, which is studied in \cite{GuMa-15}, and then as its 
application, the asymptotic strong Feller property,  the asymptotic gradient estimate
and the asymptotic heat kernel estimate are studied. Moreover, our results can be  partially  applied
to the \eqref{sch-1}  with the double-well potential $F$.

Let us recall the definition of the asymptotic log-Harnack inequality precisely based on \cite{BaWaYu-19, Xu-11}.
Let $(E, d)$  be  a Polish  space  and let $B_b(E)$ be the family of bounded 
measurable functions on $E$. We denote by  $\|\phi\|_{\infty}$  the  uniform norm of $\phi \in 
B_b(E)$.
 For a function $\phi$ on $E$, we denote by $|\nabla \phi |(x)$ its 
local Lipschitz constant at $x$,  that is, 
\begin{align*}
|\nabla \phi |(x)= \limsup_{y\to x} \frac{|\phi(x) -\phi(y)|}{d(x, y)}.
\end{align*}
In addition, here and in the sequel, $\| \nabla \phi \|_{\infty}=\sup_{x\in E} |\nabla \phi |(x)$.

\begin{defn}
Let $(P_t)_{t \geq 0}$ be a Markov semigroup on  $(E, d)$. It is called  that 
$(P_t)_{t \geq 0}$  satisfies an asymptotic log-Harnack inequality if
there exist two non-negative functions $\Phi(\cdot, \cdot)$ on $E \times E$ and 
$\Psi(\cdot, \cdot, \cdot)$ on $[0, \infty) \times E \times E$ satisfying  $\Psi(\cdot, \cdot, \cdot) \to 0$ as $t\to \infty$ such that 
\begin{align*}
P_t  \log \phi(y) \leq \log P_t \phi(x) + \Phi(x, y) +\Psi(t, x, y)\|\nabla \log \phi \|_\infty,  \ t>0
\end{align*}
holds for any  $x, y \in E$ and any positive $\phi \in B_b(E)$ with $\|\nabla \log \phi \|_\infty< \infty$.
\end{defn} 
Thanks to  the Jensen inequality, it is natural to set $\Phi(x,x)=\Psi(t,x, x)=0$ for any $t\geq 0$
and $x\in E$.
It is known that one of the important applications of the asymptotic log-Harnack inequality is that it implies the asymptotic strong Feller property, see 
Proposition 1.6  \cite{Xu-11} or Theorem 2.1 \cite{BaWaYu-19}.  For the reader's convenience,  let us  recall the definition of the asymptotic strong Feller property  according to the original paper \cite{HaMa-06}.
For a pseudo-metric $d_p$ on $E$ and two probability measures $\mu_1, \mu_2$ on $E$, let us define the transportation cost $\|\mu_1- \mu_2\|_{d_p} $ by 
$$\|\mu_1- \mu_2\|_{d_p} =\inf_{ \mu\in \mathcal{C}(\mu_1, \mu_2)}
\int_{E\times E} d_p(x,y)\mu(dx,dy),
$$
where $\mathcal{C}(\mu_1, \mu_2)$ denotes the collection of  all
probability measures on $E\times E$ with marginals $\mu_1$ and 
$\mu_2$.
 We say that $\{d_n\}_{n=1}^\infty$ is a totally separating system of 
pseudo-metrics for $E$ if  for any $m<n$ and $x,y \in E$, 
$d_m(x, y) \leq d_n(x,y)$, and   for any $x\neq y$ $\lim_{n\to \infty} d_n(x, y)=1$. 
\begin{defn}[Definition 3.1 \cite{HaMa-06}]
The Markov semigroup $(P_t)_{t \geq 0}$ on  $(E, d)$ is said to 
be
asymptotically strong Feller at point $x\in E$ 
if there exist a totally separating system of 
pseudo-metrics  $\{d_n\}_{n=1}^\infty$ for $E$ and a positive  sequence 
$\{t_n\}_{n=1}^\infty$ such that 
$$
\inf_{B\in \mathcal{B}_x}\limsup_{n \to \infty} \sup_{y\in B}\|P_{t_n}1_B(x) - P_{t_n}1_B(y)\|_{d_n} =0,
$$
where $\mathcal{B}_x$ denotes the family of all open sets including 
$x$. In addition, if this property holds for any $x\in E$, then  
$(P_t)_{t \geq 0}$  is said to be asymptotically strong Feller.
\end{defn}
Let us now explain our main goal of this section in detail.
More precisely, we intend to establish the asymptotic log-Harnack inequality for the 
Markov semigroup associated with  one of the limits of  the sequence
$\{u^n\}_{n=1}^\infty$ of the solutions of the  stochastic partial differential equation studied in \cite{GuMa-15}
\begin{align}\label{spde-1}
\left\{
\begin{aligned}
\frac{\partial u^n}{\partial t}(t, \theta)= 
& -\frac{1}{2} \frac{\partial^2 }{ \partial \theta^2}
\left\{ \frac{\partial^2 u^n }{ \partial \theta^2}(t, \theta) -p_n(u^n(t,\theta )) +\lambda u^n(t, \theta) \right\}  \\
& \qquad 
 + B\dot{W}(t, \theta), \ \  t>0, \ \theta\in (0, 1),\\
u^n(t,0) = & u^n(t,1)=  \frac{\partial^3 u^n}{\partial \theta^3} (t,0)= \frac{\partial^3 u^n}{\partial \theta^3} (t,1)=0,\  t\geq 0,\\
u^n(0,\theta) = &x(\theta),\   \theta\in(0, 1),
\end{aligned}
\right.
\end{align}
where  $$p_n(u)=2\sum_{i=0}^n \frac{u^{2i+1}}{2i+1}, \ u \in \mathbb{R}$$ is a non-decreasing 
$(2n+1)$-degree  polynomial.  It is easy to show that $-p_n(u)+ \lambda u$ converges to $f(u)$ for $u \in (-1,1)$. 

In this part, we will assume that $B$ is a Hilbert-Schmidt operator from $L^2(0,1)$ to 
$\bf{H}$, which it is equivalent to the fact that $B(-A)^{-1} B^*$ is a trace class
on  $L^2{(0,1)}$. Indeed, 
\begin{align*}
\| B\|_{\mathcal{L}_{HS}}^2 &=\sum_{n=0}^\infty \|Be_n\|_{-1}^2
= \sum_{n=0}^\infty \left\langle (-A)^{-\frac12} Be_n, 
 (-A)^{-\frac12} Be_n  \right\rangle \\
& = \sum_{n=0}^\infty \left\langle B^*(-A)^{-1} Be_n, e_n 
\right\rangle
={\rm Tr} (B(-A)^{-1} B^*),
\end{align*}
where $\| \cdot \|_{\mathcal{L}_{HS}}^2$ denotes the norm of
the Hilbert-Schmidt operator from $L^2(0,1)$ to 
$\bf{H}$, $B^*$ denotes the adjoint operator of $B$ and 
{\rm Tr} denotes the trace of an operator  on $L^2{(0,1)}$.
In the following, we set $ {\rm Tr}_{-1} ={\rm Tr}(B(-A)^{-1} B^*)$. 
In addition, to consider the ergodic property, we assume\medskip

{\bf (A1)}: $B^*e_0=0$.
\begin{rem}
To study the ergodic property, {\bf (A1)} is necessary. In fact,
it is easy to show that $\overline{u^n}(t) =\bar{x} + 
\langle B^*e_0, W(t) \rangle.$ Thus, if {\bf (A1)} fails, then there cannot be  have a stationary solution. There is no fixed mass $c$ and there is no 
invariant measure on $\bf{H}^{c}$. 
\end{rem}

Using the  notations introduced in Section \ref{sec-1}, the SPDE
 \eqref{spde-1} can be rewritten in its abstract form as below.
\begin{align}\label{spde-2}
\left\{
\begin{aligned}
du^n(t)= 
& -\frac{1}{2} A
\big\{ A u^n (t) -p_n(u^n(t)) +\lambda u^n(t) \big\}dt  + Bd{W}(t), \ \  t>0, \\
u^n(0) = &x.
\end{aligned}
\right.
\end{align}
 It is known that for each $n\in \mathbb{N}$, \eqref{spde-1} has a unique mild
 (or weak) solution $u^n$ satisfying $u^n \in C([0,\infty); H) \cap L^{2n+2} ((0,\infty) \times (0,1) ) \ a.s.$, see  
\cite{DaDe-96} or \cite{GuMa-15}.  We also know that the average of  
$u^n(t)$ is conservative, that is, $\overline{u^n}(t) =\bar{x}\ a.s.$ because of the assumption {\rm \bf (A1)}.  

Hence,  we know that \eqref{spde-1} develops in the affine space 
${\bf H}^c$  if the average $\bar{x}$ of the initial datum $x$ equals to $c$.

For each $c \in \mathbb{R}$,
let denote by $(P_t^{n, c})_{t\geq 0}$ the Markov semigroup determined by  \eqref{spde-1}, that is,
\begin{align*}
P_t^{n, c}\phi(x)= \mathbb{E}[ \phi(u^n(t; x)) ],  \ t \geq 0, x\in {\bf H}^c, \phi \in B_b({\bf H}^c),
\end{align*}
Here and in the sequel,  to specify the initial value $x$, we use 
$u^n(t; x)$ to denote the solution of \eqref{spde-2}.
 
 The  following theorem is summarized  from Proposition 3.3 and Theorem 4.1 \cite{GuMa-15}.
\begin{thm} \label{thm-1.1}
Under all of the above assumptions, for any $c\in (-1,1)$, the following results hold.\\
{\rm (i)}   There exists a subsequence $\{n_k\}$ and a Markov semigroup $(P_t^c)_{t\geq 0}$ such that 
\begin{align*}
\lim_{k \to \infty} P_t^{c, n_k} \phi(x) =P_t^c\phi(x)
\end{align*}
holds for any $x\in {\bf H}^c$ and any $\phi \in B_b({\bf H}^c)$. 
\\
{\rm (ii)}  $(P_t^c)_{t\geq 0}$ has an  invariant probability measure $\tilde{\mu}^c$.
\end{thm}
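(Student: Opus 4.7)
My plan is to obtain (i) via a tightness-and-compactness argument for the approximating processes $u^n$, and (ii) via a Krylov-Bogoliubov construction combined with the convergence from (i).

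For part (i), the first step is to derive uniform-in-$n$ a priori estimates. Applying It\^o's formula in $\mathbf{H}$ to the approximating Ginzburg-Landau energy $\mathcal{E}_n(u)=\int_0^1\bigl(\tfrac12|\nabla u|^2+F_n(u)\bigr)d\theta$, where $F_n$ is the primitive of $-(-p_n+\lambda\,\mathrm{id})$ normalised by $F_n(0)=0$, and using $\mathrm{Tr}_{-1}<\infty$, one obtains a standard Cahn-Hilliard energy estimate controlling $\mathbb{E}\,\mathcal{E}_n(u^n(t))$ together with the $L^2_t V_1$-norm of the chemical potential $Au^n-p_n(u^n)+\lambda u^n$. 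Because the $F_n$ increase to $F$ on $(-1,1)$ and force any limit to remain a.e.\ in $[-1,1]$, these bounds combined with control of the time increments of $u^n$ and an Aubin-Lions compact embedding yield tightness of $\{\mathcal{L}(u^n)\}$ on $C([0,T];\mathbf{H})$. Prokhorov and Skorokhod then produce a subsequence $u^{n_k}$ converging a.s.\ on an auxiliary probability space to some $u$ with $u\in[-1,1]$ a.e. Passing to the limit in the mild formulation of \eqref{spde-2}, the excess of $p_{n_k}(u^{n_k})-\lambda u^{n_k}$ over $-f(u)$ must concentrate on $\{u=\pm1\}$ as two non-negative measures $\eta_\pm$, recovering \eqref{sch-1}.

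Identifying the $\eta_\pm$ and verifying the complementarity conditions $\int(1\pm u)\,\eta_\mp=0$ is the main technical obstacle of part (i); I would handle it by testing against $1\pm u$ and exploiting monotonicity of the $p_n$ together with uniform integrability of $p_n(u^n)$ away from the boundary. Pointwise convergence $P_t^{c,n_k}\phi(x)\to P_t^c\phi(x)$ then follows from bounded convergence for $\phi\in C_b(\mathbf{H}^c)$ and extends to $B_b(\mathbf{H}^c)$ by a monotone class argument, while the Markov semigroup property is inherited in the limit.

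For part (ii) I would apply Krylov-Bogoliubov in two stages. Fixing $x_0\in\mathbf{H}^c$ with $\mathcal{E}_n(x_0)$ bounded in $n$ (for instance $x_0\equiv c$), the $V_2$-dissipation built into the energy identity upgrades to a Lyapunov bound $\sup_{t\ge 0}\mathbb{E}\,\mathcal{E}_n(u^n(t;x_0))\le C$, uniformly in $n$. The compact embedding $V_1\hookrightarrow\mathbf{H}$ then gives tightness on $\mathbf{H}^c$ of the time-averaged laws $\mu^{n,c}_T:=\tfrac1T\int_0^T\mathcal{L}(u^n(t;x_0))\,dt$, and any weak limit as $T\to\infty$ is an invariant measure $\tilde\mu^{c,n}$ for $P_t^{c,n}$. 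Since the bound is $n$-uniform, the family $\{\tilde\mu^{c,n_k}\}$ is itself tight, and a further weak limit $\tilde\mu^c$ is invariant for $P_t^c$ thanks to the convergence in (i) tested against continuous bounded functions. The crucial point is that the Lyapunov bound depends only on monotonicity of $p_n$, on $\mathrm{Tr}_{-1}<\infty$, and on assumption {\bf (A1)}, which pins the average to $c$ and prevents any drift in the zero mode.
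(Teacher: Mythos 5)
First, a point of comparison: the paper itself does not prove Theorem \ref{thm-1.1} --- it is imported verbatim from Proposition 3.3 and Theorem 4.1 of \cite{GuMa-15} --- so there is no internal proof to measure you against. Your overall architecture (uniform energy estimates for the $u^n$, tightness and a limiting transition function for (i), Krylov--Bogoliubov with an $n$-uniform Lyapunov bound for (ii)) is the right one and is essentially the strategy of the cited source.

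There are, however, two genuine gaps. First, the extension of $P_t^{c,n_k}\phi(x)\to P_t^c\phi(x)$ from bounded continuous $\phi$ to all $\phi\in B_b({\bf H}^c)$ does \emph{not} follow ``by a monotone class argument'': weak convergence of the laws of $u^{n_k}(t;x)$ is not stable under bounded pointwise limits of test functions (take $\mu_k=\delta_{1/k}\to\delta_0$ and $\phi=1_{\{0\}}$), because the limit in $k$ and the limit along the approximating sequence of $\phi$'s do not commute. Convergence against all of $B_b$ is genuinely stronger than weak convergence and needs an extra ingredient --- quantitative, $n$-uniform regularization/Feller estimates for the approximating semigroups; the same uniform estimates are also what allow the subsequence to be chosen independently of $(t,x)$ and the Chapman--Kolmogorov identity to survive the limit, which you assert but do not justify: marginal convergence at each fixed $t$ does not by itself give $P_t^{n_k}\bigl(P_s^{n_k}\phi\bigr)(x)\to P_t^c\bigl(P_s^c\phi\bigr)(x)$. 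Second, your route through identifying the limit as a solution of \eqref{sch-1} with reflection measures $\eta_\pm$ and the complementarity conditions is both unnecessary and, in this degenerate-noise setting, unavailable: as the paper stresses immediately after Theorem \ref{thm-1.1}, the limit of $\{u^n\}$ cannot be characterized as a solution of an SPDE (see Section 5 of \cite{GuMa-15}). The theorem only asserts convergence of the semigroups and existence of an invariant measure, so the correct proof stops at the level of transition functions; attempting the identification would turn a doable compactness argument into an open problem. Part (ii) of your plan is otherwise sound, noting that invariance of the weak limit $\tilde\mu^c$ only requires the convergence in (i) against continuous test functions, which is the portion of (i) you actually establish.
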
 
In the following, we will fix a converging subsequence 
$P_t^{n_k,c}$ stated in Theorem \ref{thm-1.1}.
For simplicity, we will still use the notation $P_t^{n,c}$ and
$u^{n}(t)$ instead of  $P_t^{n_k,c}$ and $u^{n_k}(t)$.  Let
us denote by $u(t;x)$ the limit process  of $u^{n_k}(t)$, which is  the Markov process associated with $(P_t^c)_{t\geq 0}$. 
Formally speaking, 
the sequence  $\{u^n\}_{n=1}^\infty$  converges to the solution of 
\eqref{sch-1}, see \cite{GuMa-15}. But any limit of 
$\{u^n\}_{n=1}^\infty$ cannot be characterized as a solution of 
SPDEs, see Section 5, \cite{GuMa-15} for more information.  Here we show that
the invariant measure $\tilde{\mu}^c$ is exponentially integrable.

\begin{thm} \label{thm-2.2-190902}
Let $c\in (-1,1)$ and suppose the assumptions in Theorem 
\ref{thm-1.1} hold.  For any  $\varsigma>0$ satisfying
 $\pi^4 >2\varsigma \|B^*\|^2$,
where $\|B^*\|$ denotes the operator norm of $B^*$, 
then the invariant measure $\tilde{\mu}^c$ satisfies the exponential integrability 
\begin{align}\label{eq-2.6-0512}
\tilde{\mu}^c\left(\exp (\varsigma |\cdot |_{-1}^2) \right)<\infty.
\end{align}

If further $\pi^4 >\lambda$, then $\tilde{\mu}^c$ is the unique 
invariant measure and  for any  Lipschitz continuous function
$\phi \in B_b({\bf H}^c)$, 
\begin{align}\label{eq-2.7-0512}
|P_t^c \phi(x) -\tilde{\mu}^c(\phi)| \leq \|\nabla \phi\|_\infty \exp^{-(\pi^4 -\lambda)t} 
\left(|x|_{-1} + \tilde{\mu}(|\cdot|_{-1}) \right), \ x\in {\bf H}^c, t\geq 0.
\end{align}

\end{thm}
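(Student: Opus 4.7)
I would prove both parts by a Lyapunov-plus-coupling strategy at the level of the approximating system \eqref{spde-2}, and then pass to the limit via Theorem~\ref{thm-1.1} and the invariance of $\tilde{\mu}^c$. The main tool for \eqref{eq-2.6-0512} is It\^o's formula applied to $V(u^n) := |u^n|_{-1}^2 = \langle(-A)^{-1}u^n, u^n\rangle$. Using self-adjointness of $(-A)^{-1}$ on the zero-mean subspace, the drift of $V(u^n)$ reduces to
\[
-|u^n|_1^2 - \langle u^n, p_n(u^n)\rangle + c\int_0^1 p_n(u^n)\,d\theta + \lambda(|u^n|^2 - c^2) + {\rm Tr}_{-1},
\]
while the martingale part has quadratic variation $4|B^*(-A)^{-1}(u^n - c)|^2\,dt \leq 4\|B^*\|^2 V(u^n)\,dt$. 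Since $u^n$ has spatial mean $c$, the identity $c\int_0^1 p_n(u^n)\,d\theta - \langle u^n, p_n(u^n)\rangle = -\int_0^1 (u^n - c)(p_n(u^n) - p_n(c))\,d\theta \leq 0$ follows from monotonicity of $p_n$ and neutralises the singular nonlinearity uniformly in $n$. Combined with the Poincar\'e-type inequality $|u|_1^2 \geq \pi^4 |u|_{-1}^2$ on zero-mean functions, It\^o's formula for $\exp(\varsigma V)$ yields
\[
\tfrac{d}{dt}\mathbb{E}\,e^{\varsigma V(u^n(t))} \leq -\varsigma\bigl(\pi^4 - 2\varsigma\|B^*\|^2\bigr)\mathbb{E}\bigl[V\,e^{\varsigma V}\bigr] + C\,\mathbb{E}\,e^{\varsigma V},
\]
whose leading coefficient is negative precisely under the stated hypothesis. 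Gronwall then produces a uniform-in-$(n,t)$ bound on $\mathbb{E}\,e^{\varsigma V(u^n(t))}$; Fatou along the limiting subsequence of Theorem~\ref{thm-1.1} transfers this bound to $u(t;x)$, and invariance of $\tilde{\mu}^c$ plus one more Fatou step delivers \eqref{eq-2.6-0512}.

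For \eqref{eq-2.7-0512} I would use a synchronous coupling: for $x, y \in {\bf H}^c$ let $u^n_x(t), u^n_y(t)$ solve \eqref{spde-2} driven by the same cylindrical Wiener process, so the noise cancels in $z(t) := u^n_x(t) - u^n_y(t)$, which is mean-zero. A direct calculation gives
\[
\tfrac{d}{dt}|z|_{-1}^2 = -|z|_1^2 - \langle z, p_n(u^n_x) - p_n(u^n_y)\rangle + \lambda|z|^2,
\]
where the middle term is $\leq 0$ by monotonicity of $p_n$ and the remaining quadratic form is dominated by $-(\pi^4 - \lambda)|z|_{-1}^2$ via the Poincar\'e-type inequalities, using the hypothesis $\pi^4 > \lambda$. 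Gronwall and passage to the limit $n\to\infty$ deliver the pathwise contraction $|u(t;x) - u(t;y)|_{-1} \leq e^{-(\pi^4-\lambda)t}(|x|_{-1} + |y|_{-1})$. For any Lipschitz $\phi \in B_b({\bf H}^c)$, the estimate $|P_t^c\phi(x) - P_t^c\phi(y)| \leq \|\nabla\phi\|_\infty \mathbb{E}|u(t;x) - u(t;y)|_{-1}$, integrated against $\tilde{\mu}^c(dy)$ and combined with its invariance and the finite moment $\tilde{\mu}^c(|\cdot|_{-1}) < \infty$ coming from \eqref{eq-2.6-0512}, produces \eqref{eq-2.7-0512}. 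Uniqueness of $\tilde{\mu}^c$ then follows automatically: any invariant probability $\nu$ satisfies $\nu(\phi) = \int P_t^c\phi\,d\nu \to \tilde{\mu}^c(\phi)$ for Lipschitz $\phi$ as $t \to \infty$, and Lipschitz functions are measure-determining on the Polish space ${\bf H}^c$.

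The main difficulty is to preserve the usefulness of the nonlinear cancellation $c\int p_n(u^n)\,d\theta - \langle u^n, p_n(u^n)\rangle \leq 0$ under the exponential test function while keeping the bounds uniform in $n$: since the polynomial $p_n$ has unbounded degree $2n{+}1$, any Cauchy--Schwarz-type control of $p_n$ diverges, and one must exploit the pointwise non-positivity \emph{before} taking expectations. A secondary obstacle is that the limit $u(t;x)$ is not characterised as the solution of an SPDE (cf.\ Section~5 of \cite{GuMa-15}), so the passage to the limit in both the exponential moment and the coupling estimate has to rely on Theorem~\ref{thm-1.1} combined with lower semi-continuity of $V$ and $e^{\varsigma V}$.
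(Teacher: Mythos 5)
Your proposal follows essentially the same route as the paper's proof: It\^o's formula for $\exp(\varsigma|u^n(t)|_{-1}^2)$ at the level of the polynomial approximation \eqref{spde-2} (the paper imports the semimartingale decomposition and the drift bound $d|u^n|_{-1}^2\le(-|u^n|_1^2+P_c(\lambda))dt+2dM^n$ from Proposition 3.1 of \cite{GuMa-15}, which encapsulates exactly the monotonicity cancellation you spell out), uniform-in-$n$ exponential bounds passed to the limit along the subsequence of Theorem \ref{thm-1.1} and combined with invariance, and, for \eqref{eq-2.7-0512}, the same synchronous-coupling contraction $|u(t;x)-u(t;y)|_{-1}\le e^{-(\pi^4-\lambda)t}|x-y|_{-1}$ integrated against $\tilde{\mu}^c$. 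The only cosmetic differences are that the paper closes the exponential estimate via a time-integrated bound $\mathbb{E}\int_0^{T}e^{\varsigma|u^n(t)|_{-1}^2}dt\le C(1+T)$ (with explicit stopping times and the elementary inequality $(-ax+b)e^x\le -ae^x+c$) rather than your pointwise-in-$t$ Gronwall bound, and that both you and the paper state the contraction rate $\pi^4-\lambda$ without detailing the modewise inequality $-|z|_1^2+\lambda|z|^2\le-\mu|z|_{-1}^2$ that underlies it.
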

\begin{proof}
According to the proof of Proposition 3.1 \cite{GuMa-15},
we have that for each $n\in \mathbb{N}$,
$|u^n(t)|_{-1},\  t\geq 0$ is a continuous semimartingale with its  local
martingale part
\begin{align}\label{eq-2.8-190528}
M^n(t) =2\int_0^t\langle B^*u^n (s), dW(s) \rangle, \ t\geq 0.
\end{align} 
Moreover, the estimate 
\begin{align*}
d|u^n(t)|_{-1}^2 \leq \left(-|u^n(t)|_{1}^2 +P_c(\lambda) \right)dt  +2dM^n(t), \ t\geq 0\ a.s.
\end{align*}
is proved in the proof of Proposition 3.1 \cite{GuMa-15}, 
where  $P_c(\lambda)$ is a positive constant depending on $c, \lambda$ and ${\rm Tr}_{-1}$, but  independent of $n$. 

Noting that 
$|x|_{1} \geq \pi^2 |x|_{-1}, x\in V_{1}$, from the above inequality, it follows that 
\begin{align}\label{eq-2.7-0511}
d|u^n(t)|_{-1}^2 \leq \left(-\pi^4 |u^n(t)|_{-1}^2 +P_c(\lambda) \right)dt  +2dM^n(t), \ t\geq 0\ a.s.
\end{align}
Let $\tau_m^n=\inf\{t\geq 0: |u^n(t)|_{-1}\geq m \}, m \in \mathbb{N}$ 
be the sequence of stopping times. 
Then it is easy to show $\lim_{m \to \infty}\tau_m^n = \infty \ a.s.$ and $M^n(t\wedge \tau_m^n), t\geq 0$ is a square integrable continuous martingale. 
 Applying the It\^{o}'s formula and
using \eqref{eq-2.8-190528} and \eqref{eq-2.7-0511}, we have
\begin{align}\label{eq-2.8-0511}
& d \exp(\varsigma |u^n(t)|_{-1}^2)\\
 \leq & 
\varsigma  \exp(\varsigma |u^n(t)|_{-1}^2) (-\pi^4  |u^n(t)|_{-1}^2+P_c(\lambda) )  dt  \notag \\
& + 2\varsigma  \exp(\varsigma |u^n(t)|_{-1}^2) dM^n(t)
+ 2\varsigma^2  \exp(\varsigma |u^n(t)|_{-1}^2) |B^* u^n(t)|^2dt
  \notag 
\\
  \leq & 
\varsigma  \exp(\varsigma |u^n(t)|_{-1}^2) 
\{(-\pi^4 +{ 2\varsigma }\|B^*\|^2  )
 |u^n(t)|_{-1}^2+P_c(\lambda) 
\}dt     \notag  \\
& + 2\varsigma  \exp(\varsigma |u^n (t)|_{-1}^2) dM^n(t), \ t\leq T \wedge \tau_m^n.   \notag 
\end{align}
Combining  the fact that 
$\pi^4 >2\varsigma \|B^*\|^2$ with \eqref{eq-2.8-0511}, 
we obtain that 

{there exists a positive constant 
$K=K(\varsigma, \|B^*\|, P_c(\lambda))$ independent of $m, n$ and $t$ such that 
}
\begin{align}\label{eq-2.9-0511}
 d \exp(\varsigma |u^n(t)|_{-1}^2)
 \leq
& \left\{ 
K
 -\varsigma (\pi^4 -2 \varsigma \|B^*\|^2 ) \exp(\varsigma |u^n(t)|_{-1}^2)  \right\}dt
  \\
& + 2\varsigma  \exp(\varsigma |u^n(t)|_{-1}^2) dM^n(t), \ t\leq T \wedge \tau_m^n.   \notag 
\end{align}
To choose the constant $K$ in the above inequality, the following fundamental  inequality  is utilized:\\
For any fixed $a,b>0$, there exists a constant 
$c=c(a,b)>0$,
such that
$$(-ax + b) e^x \leq -a e^x +c, \ x\geq 0.$$
Now, noting $\varsigma >0$ and then taking $a=\pi^4 -\varsigma \|B^*\|^2 $, $b=P_c(\lambda)$, we can choose the proper constant $K$.

Thus, integrating both sides of \eqref{eq-2.9-0511} form $0$ to
$T \wedge \tau_m^n$, taking expectations, bounding nonpositive term, we obtain  that
\begin{align*}
%{\color{blue}
 \mathbb{E} \left[ 
\int_0^{T \wedge \tau_m^n} \exp(\varsigma |u^n (t)|_{-1}^2)  dt  \right]
% }
\leq  
\frac{\exp(|x|_{-1}^2) +KT}  %{\color{blue} }}
{\varsigma (\pi^4 -2\varsigma \|B^*\|^2) }, 
\end{align*}

which by letting $m \to \infty$ gives that 
\begin{align*}
\mathbb{E} \left[ 
\int_0^{T } \exp(\varsigma |u^n (t)|_{-1}^2) dt \right]
\leq  
\frac{\exp(|x|_{-1}^2) + K T}
{\varsigma (\pi^4 -2 \varsigma \|B^*\|^2) }
\end{align*}
for all $n\in \mathbb{N}$.

Recalling that we have fixed the converging subsequence and then letting $n \to \infty$, we 
have 
\begin{align*}
\mathbb{E} \left[ 
 \int_0^{T }\exp(\varsigma |u (t)|_{-1}^2) dt 
\right]
\leq  
\frac{\exp(|x|_{-1}^2)+  KT }
{\varsigma (\pi^4 -2 \varsigma \|B^*\|^2) },
\end{align*}
which implies the desired result \eqref{eq-2.6-0512}.

Let us now give the proof of  \eqref{eq-2.7-0512}.
Under our assumptions, we can easily show the following $1$-Lipschitz continuity of $(u(t))_{t\geq 0}$ on its initial data:
\begin{align}\label{eq-2.11-0512}
|u(t;x)- u(t;y)|_{-1} \leq \exp(-(\pi^4 -\lambda) t)|x-y|_{-1}
\end{align}
holds for any $x, y\in {\bf{H}^c}, t\geq 0$. 
Here we omit its proof and
refer the reader to Lemma \ref{lem-1.3}  below for a similar discussion.
Since $\tilde{\mu}^c$ is invariant for $P_t^c$, for any  Lipschitz continuous function
$\phi \in B_b({\bf H}^c)$, we have
\begin{align*}
|P_t^c \phi(x) -\tilde{\mu}^c(\phi)| 
= & |P_t^c \phi(x) -\tilde{\mu}^c(P_t^c\phi)| \\
\leq & \int_{\bf{H}^c} |P_t^c \phi(x) -P_t^c\phi(y)| \tilde{\mu}^c(dy) \notag \\
\leq & \|\nabla \phi\|_\infty \int_{\bf{H}^c} |u(t; x)- u(t;y)|_{-1}\tilde{\mu}^c(dy). \notag 
\end{align*}
Consequently, we can easily complete the proof  of \eqref{eq-2.7-0512} 
by \eqref{eq-2.11-0512} and \eqref{eq-2.6-0512}.
\end{proof}

From now on, let us establish the asymptotic log-Harnack inequality for $P_t^c$
under the following highly degenerate condition:\medskip

\rm{\bf(A2)}: There exists a non-negative sequence $\{b_i\}_{i=1}^\infty$ such that   $Bu=\sum_{i=1}^\infty b_i \langle u, e_i \rangle e_{i}$ and there exists a big enough integer $N$ such that 
$b_i>0, \ i=1,2, \cdots, N$ and 
\begin{align} \label{eq-1.9-0413}
(N+1)^2 \pi^2 >\lambda.
\end{align}

From this assumption, it follows that 
${\rm  span}\{ e_1, \cdots, e_N\} \subset {\rm Rang}(B)$ and such setting is known as the so-called essentially elliptic condition, see
Section 4.5 \cite{HaMa-06}.
\\
Let $\Pi_l$ be the projector from  ${\bf H}^c$ into the $(N+1)$-dimension space 
${\rm  span}\{e_0, e_1, \cdots, e_N\}$, where $N$ is  the integer appearing in the above assumption
{\rm \bf (A2)}.  
Moreover, we know that 
$B$ restricted on  ${\rm  span}\{e_1, e_2, \cdots, e_N\}$ is invertible  and its inverse will be denoted by $B^{-1}$. Thus, 
the operator $B^{-1}A\Pi_l$ is well-defined from  ${\bf H}^c$ to  ${\rm  span}\{e_1, e_2, \cdots, e_N\}$ and is bounded.
Set $$\alpha =\frac12 
\min \left\{\pi^4, \left[(N+1)^2 \pi^2 -\lambda \right]
(N+1)^2\pi^2 \right\}.$$ Now we can formulate the main result of this section. 
\begin{thm} \label{thm-1.2}
Suppose the assumptions {\rm \bf (A1)}-{\rm \bf (A2)} are satisfied. Then, for any $c\in (-1,1)$, the Markov semigroup $(P_t^c)_{t\geq 0}$ satisfies the asymptotic log-Harnack inequality.
More precisely, we have that 
\begin{align} \label{eq-1.10-0414}
P_t^c  \log \phi(y) \leq & \log P_t^c \phi(x) +
 \frac{\lambda}{8\alpha} (1- \exp(-2\alpha t))\| B^{-1}A\Pi_l \|_{op}^2 |x-y|_{-1}^2 \\
& +\exp(-\alpha t)\|\nabla \log \phi \|_\infty|x-y|_{-1},  \ t>0 \notag
\end{align}
holds for any  $x, y \in {\bf H}^c$ and any positive $\phi \in B_b({\bf H}^c)$ with $\|\nabla \log \phi \|_\infty< \infty$, 
where  $\| B^{-1}A\Pi_l \|_{op}$ denotes the operator norm of
 $B^{-1}A\Pi_l$ from ${\bf H}^c$ to the $N$-dimensional space  ${\rm  span}\{e_1, e_2, \cdots, e_N\}$. 
\end{thm}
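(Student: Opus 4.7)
The plan is to prove \eqref{eq-1.10-0414} first for each approximating semigroup $P_t^{n,c}$ from \eqref{spde-2} via an asymptotic coupling by change of measure, then pass to the limit along the subsequence supplied by Theorem \ref{thm-1.1}. Given $x,y\in\mathbf{H}^c$, on the same probability space as the reference process $u^n = u^n(\cdot;x)$ I would construct an auxiliary process $v^n$ with $v^n(0)=y$ solving the controlled SPDE obtained from \eqref{spde-2} by adding the drift $Bh(t)\,dt$, where the feedback is
\begin{equation*}
h(t) = -\tfrac{\lambda}{2}\,B^{-1}A\Pi_l\bigl(u^n(t)-v^n(t)\bigr).
\end{equation*}
Assumption {\bf (A2)} makes $B$ invertible on $\mathrm{span}\{e_1,\dots,e_N\}$, and assumption {\bf (A1)} together with mass conservation guarantees that $\Pi_l(u^n-v^n)$ has zero mean, so $h$ is well-defined, Lipschitz in $v^n$, and takes values in the finite-dimensional controllable subspace. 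Girsanov then produces a measure $d\tilde{\mathbb{P}} = R_T\,d\mathbb{P}$ with $R_T = \exp\bigl(-\int_0^T\langle h,dW\rangle - \tfrac{1}{2}\int_0^T|h|^2\,ds\bigr)$ under which $v^n$ has the law of $u^n(\cdot;y)$ under $\mathbb{P}$.

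The core technical step is an exponential decay estimate for $z^n := u^n - v^n$, which satisfies a pathwise (noise-free) evolution equation. Testing against $(-A)^{-1}z^n$ yields
\begin{equation*}
\tfrac{d}{dt}|z^n|_{-1}^2 = -|z^n|_1^2 - \langle z^n,\, p_n(u^n) - p_n(v^n)\rangle + \lambda|z^n|^2 - 2\langle z^n,\, (-A)^{-1}Bh\rangle,
\end{equation*}
in which the $p_n$-term is non-positive by monotonicity of $p_n$ and the control term evaluates to $-\lambda|\Pi_l z^n|^2$ by the choice of $h$. Splitting $z^n$ into low- and high-frequency parts and using the spectral bounds $|\Pi_l z^n|_1^2 \geq \pi^4 |\Pi_l z^n|_{-1}^2$ on low modes together with $|(I-\Pi_l) z^n|_1^2 - \lambda|(I-\Pi_l)z^n|^2 \geq [(N+1)^2\pi^2-\lambda](N+1)^2\pi^2 |(I-\Pi_l)z^n|_{-1}^2$ on high modes, the definition of $\alpha$ delivers $\tfrac{d}{dt}|z^n|_{-1}^2 \leq -2\alpha |z^n|_{-1}^2$. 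Gronwall then gives the pathwise decay $|z^n(t)|_{-1} \leq e^{-\alpha t}|x-y|_{-1}$, whence the deterministic bound $|h(t)|^2 \leq \tfrac{\lambda^2}{4}\|B^{-1}A\Pi_l\|_{op}^2 e^{-2\alpha t}|x-y|_{-1}^2$, which secures Novikov's condition and provides an explicit control on $\tfrac{1}{2}\int_0^t|h|^2\,ds$ of exactly the form appearing in \eqref{eq-1.10-0414}.

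The log-Harnack inequality at level $n$ is then obtained from the standard Young/entropy argument:
\begin{align*}
P_t^{n,c}\log\phi(y) &= \tilde{\mathbb{E}}[\log\phi(v^n(t))] \\
&\leq \tilde{\mathbb{E}}[\log\phi(u^n(t))] + \|\nabla\log\phi\|_\infty\,\tilde{\mathbb{E}}[|z^n(t)|_{-1}] \\
&\leq \log P_t^{n,c}\phi(x) + \tilde{\mathbb{E}}[\log R_T] + \|\nabla\log\phi\|_\infty\, e^{-\alpha t}|x-y|_{-1},
\end{align*}
where the last step uses the entropy inequality together with $\tilde{\mathbb{E}}[\log R_T] = \tfrac{1}{2}\tilde{\mathbb{E}}[\int_0^t|h|^2\,ds]$. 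Feeding in the deterministic bounds of the previous paragraph yields \eqref{eq-1.10-0414} for $P_t^{n,c}$. Finally, Theorem \ref{thm-1.1}(i) allows one to pass to the limit $n\to\infty$ on both sides and obtain the asserted inequality for $P_t^c$.

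The main obstacle I anticipate is precisely this last passage to the limit: Theorem \ref{thm-1.1}(i) supplies pointwise convergence only for bounded measurable test functions, whereas $\log\phi$ is unbounded for a generic positive $\phi \in B_b$. One should therefore first prove \eqref{eq-1.10-0414} for $\phi$ satisfying $\varepsilon \leq \phi \leq \varepsilon^{-1}$ (so that $\log\phi \in B_b(\mathbf{H}^c)$), pass to the $n$-limit in this restricted class, and then remove the two-sided truncation by approximation while carefully monitoring $\|\nabla\log\phi\|_\infty$. A secondary but more routine issue is the well-posedness of the coupled pair $(u^n,v^n)$ with implicit feedback: treating $u^n$ as an exogenous input, the controlled $v^n$-equation is a Lipschitz finite-dimensional perturbation of the drift of a standard SPDE, for which unique strong solvability is classical.
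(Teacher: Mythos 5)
Your proposal follows essentially the same route as the paper: your added drift $Bh(t)\,dt$ with $h=-\tfrac{\lambda}{2}B^{-1}A\Pi_l(u^n-v^n)$ is exactly the paper's coupling equation (which replaces $\lambda v^n$ by $\lambda\Pi_h v^n+\lambda\Pi_l u^n$), and you then use the same pathwise decay $|z^n(t)|_{-1}\le e^{-\alpha t}|x-y|_{-1}$, the same Girsanov-plus-entropy (Young inequality) argument, and the same passage to the limit via Theorem \ref{thm-1.1}. Your added care about truncating $\phi$ so that $\log\phi$ is bounded before taking $n\to\infty$ is a sensible refinement of a point the paper glosses over, but it does not change the approach.
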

The proof of Theorem \ref{thm-1.2} will be stated after 
Lemma \ref{lem-1.3} below. Here let us first state some applications of Theorem \ref{thm-1.2}.
As we have stated, the asymptotic strong Feller property
can be immediately deduced from Theorem \ref{thm-1.2}.
Moreover,  thanks to Theorem 2.1 \cite{BaWaYu-19},  many
 other important properties of $P_t^c\phi$, such as its gradient
  estimate,   asymptotic heat kernel estimate and asymptotic irreducibility, can be deduced.
\begin{cor}
Under the assumptions of Theorem \ref{thm-1.2},  for any $c\in (-1,1)$ the following assertions  hold:\\ 
{\rm (i)} $(P_t^c)_{t\geq 0}$ is asymptotically strong Feller.
\\
{\rm (ii)} For any Lipschitz continuous function $\phi\in B_b({\bf{H}}^c)$,
\begin{align*}
|\nabla P_t^c \phi|\leq 
\left(\frac{\lambda}{4\alpha}\right)^\frac12 \| B^{-1}A\Pi_l \|_{op}
\sqrt{P_t^c \phi^2 -(P_t^c \phi)^2} + \|\nabla \phi\|_\infty 
\exp(-\alpha t).
\end{align*} 
{\rm (iii)} For any  non-negative $\phi \in B_b({\bf{H}}^c)$ with
 $\|\phi\|_\infty <\infty$ and all $x\in {\bf{H}}^c$,
\begin{align*}
\limsup_{t\to \infty} P_t^c \phi(x) \leq \log\left(\frac{\tilde{\mu}^c(\exp \phi)}
{\int_{{\bf{H}}^c} \exp(-\frac{\lambda}{8\alpha} \| B^{-1}A\Pi_l \|_{op}^2 |x-y|_{-1}^2)\tilde{\mu}^c(dy)} \right),
\end{align*}
where $\tilde{\mu}^c$ the  invariant probability measure of $P_t^c$.\\
{\rm (iv)} Suppose for some $x\in {\bf H}^c$ and 
a measurable set $A\subset {\bf H}^c$,   
$\liminf_{t \to \infty} P_t^c(x, A)>0$ holds. Then, for any $y \in {\bf H}^c$
and $\e>0$
\begin{align*}
\liminf_{t \to \infty} P_t^c(y, A_\e)>0, 
\end{align*} 
where $A_\e$ denotes the $\e$-neighborhood of $A$ in ${\bf H}^c$.
\end{cor}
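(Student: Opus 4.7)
The plan is to read off (i)--(iv) from the asymptotic log-Harnack inequality of Theorem \ref{thm-1.2} by applying the general machinery of Theorem~2.1 in \cite{BaWaYu-19} (cf.\ Proposition~1.6 of \cite{Xu-11}). Writing the inequality of Theorem \ref{thm-1.2} in the abstract form
\begin{equation*}
P_t^c\log\phi(y) \le \log P_t^c\phi(x) + \Phi_t(x,y) + \Psi(t,x,y)\|\nabla\log\phi\|_\infty
\end{equation*}
with $\Phi_t(x,y) = \tfrac{\lambda}{8\alpha}(1-e^{-2\alpha t})\|B^{-1}A\Pi_l\|_{op}^2|x-y|_{-1}^2$ and $\Psi(t,x,y) = e^{-\alpha t}|x-y|_{-1}$, the two structural features I exploit are the symmetry $\Phi_t(x,y)=\Phi_t(y,x)$ together with the uniform envelope $\Phi_t(x,y)\le \Phi_\infty(x,y):=\tfrac{\lambda}{8\alpha}\|B^{-1}A\Pi_l\|_{op}^2|x-y|_{-1}^2$, and the decay $\Psi(t,x,y)\to 0$ as $t\to\infty$. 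The invariant measure $\tilde{\mu}^c$ is provided by Theorem \ref{thm-1.1}, with exponential integrability coming from Theorem \ref{thm-2.2-190902}.

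For (ii), I would substitute $e^{r\phi}$ for $\phi$ (with $r>0$) to obtain
\begin{equation*}
rP_t^c\phi(y) \le \log P_t^c e^{r\phi}(x) + \Phi_t(x,y) + r\Psi(t,x,y)\|\nabla\phi\|_\infty.
\end{equation*}
Subtracting $rP_t^c\phi(x)$ and applying the Taylor/variance bound $\log P_t^c e^{r\phi}(x) - rP_t^c\phi(x)\le \tfrac{r^2}{2}e^{2r\|\phi\|_\infty}(P_t^c\phi^2(x)-(P_t^c\phi(x))^2)$, then dividing by $|x-y|_{-1}$ and optimising the resulting terms in $r$ via AM--GM (optimal $r^\ast$ of order $|x-y|_{-1}$), and finally letting $y\to x$ so that the $e^{2r\|\phi\|_\infty}$ factor tends to $1$, yields the bound stated in (ii). Assertion (i) is then an immediate consequence of (ii) via the routine pseudo-metric argument of \cite[Prop.~3.12]{HaMa-06}: with the totally separating system $d_n(x,y)=1\wedge(n|x-y|_{-1})$, one approximates indicator functions of open sets by bounded $1$-Lipschitz functions and uses that the first term of the gradient estimate is locally controlled while the second decays exponentially in $t$.

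For (iii), I would apply Theorem \ref{thm-1.2} to $e^\phi$ in place of $\phi$ with $x$ and $y$ swapped (using symmetry of $\Phi_t$), obtaining
\begin{equation*}
P_t^c\phi(x) \le \log P_t^c e^\phi(y) + \Phi_t(x,y) + \Psi(t,y,x)\|\nabla\phi\|_\infty,
\end{equation*}
then exponentiate, integrate against $\tilde{\mu}^c(dy)$ and invoke invariance $\int P_t^c e^\phi\,d\tilde{\mu}^c = \tilde{\mu}^c(e^\phi)$ to obtain
\begin{equation*}
e^{P_t^c\phi(x)}\int e^{-\Phi_t(x,y)-\Psi(t,y,x)\|\nabla\phi\|_\infty}\tilde{\mu}^c(dy) \le \tilde{\mu}^c(e^\phi).
\end{equation*}
Passing to $\limsup_{t\to\infty}$ and invoking dominated convergence (the integrand is bounded in $[0,1]$ and $\tilde{\mu}^c$ is a probability measure) produces (iii). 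For (iv), I would build a bounded Lipschitz $\phi_\delta$ with $0\le\phi_\delta\le 1$, $\phi_\delta\equiv 1$ on $A$, $\phi_\delta\equiv 0$ off $A_\e$ and $\|\nabla\phi_\delta\|_\infty\le 1/\delta$ for $\delta<\e$, apply the log-Harnack to $\phi_\delta+\kappa$ with $\kappa>0$ and $x,y$ swapped, exponentiate to obtain $P_t^c(\phi_\delta+\kappa)(y)\ge (P_t^c(\phi_\delta+\kappa)(x))\exp(-\Phi_t(x,y)-\Psi(t,y,x)/\delta)-\kappa$, and use $\liminf_{t\to\infty} P_t^c(x,A)>0$ together with $\Psi\to 0$ to deduce $\liminf_{t\to\infty} P_t^c(y,A_\e)>0$ after letting $\kappa\downarrow 0$.

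The main obstacle is ordering the limits correctly, especially in (iv): the factor $\Psi(t,y,x)/\delta$ in the exponent must be driven to zero by first sending $t\to\infty$ and only afterwards letting $\delta,\kappa\downarrow 0$, so that the exponential decay of $\Psi$ dominates the $1/\delta$ blow-up of $\|\nabla\log(\phi_\delta+\kappa)\|_\infty$. A secondary technical point is verifying in (iii) that $\int e^{-\Phi_t(x,\cdot)-\Psi(t,\cdot,x)\|\nabla\phi\|_\infty}d\tilde{\mu}^c$ stays bounded below away from zero for large $t$, which is guaranteed by the $\Phi_\infty$-integrability of $\tilde{\mu}^c$ provided by Theorem \ref{thm-2.2-190902}.
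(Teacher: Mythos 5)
Your proposal is correct and follows essentially the same route as the paper: the paper simply verifies that $\Phi(x,y)=\frac{\lambda}{8\alpha}\|B^{-1}A\Pi_l\|_{op}^2|x-y|_{-1}^2$ and $\Psi(t,x,y)=e^{-\alpha t}|x-y|_{-1}$ satisfy the limit conditions of Theorem 2.1 of \cite{BaWaYu-19} and then cites parts (1), (2) and (4) of that theorem to obtain (i)--(iv). You additionally unpack the internal arguments of that cited result (the $e^{r\phi}$ substitution and optimisation in $r$ for (ii), the swap-and-integrate argument for (iii), the cutoff construction for (iv)), all of which are sound and consistent with the standard machinery the paper invokes.
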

\begin{proof}
For any $x, y \in {\bf H}^c$, let us set 
$$\Phi(x, y)= \frac{\lambda}{8\alpha} \| B^{-1}A\Pi_l \|_{op}^2 |x-y|_{-1}^2$$ and 
$$\Psi(t,x, y)=\exp(-\alpha t)|x-y|_{-1}.$$
 Then, it is clear that
\begin{align*}
\lim_{y\to x} \frac{\Phi(x, y)}{|x-y|_{-1}^2}
=\frac{\lambda}{8\alpha} \| B^{-1}A\Pi_l \|_{op}^2
\end{align*}
and
\begin{align*}
\lim_{y\to x} \frac{\Psi(t,x, y)}{|x-y|_{-1}} = \exp(-\alpha t).
\end{align*}
Hence, the conditions in  Theorem 2.1 $(1)$ \cite{BaWaYu-19} are satisfied, and consequently  {\rm(i)} and {\rm (ii)} can be shown by Theorem 2.1 $(1)$ \cite{BaWaYu-19}.

 On the other hand, {\rm (iii)} and 
{\rm (iv)} are the direct results from Theorem 2.1 $(2)$  and $(4)$ \cite{BaWaYu-19} respectively.
\end{proof}
\begin{rem}
{\rm (i)}
By analogy to  the proof of Proposition 1.6  \cite{Xu-11}, we can also show that for  any Lipschitz continuous function $\phi\in B_b({\bf{H}}^c)$,
\begin{align}\label{eq-1.11-0509}
|\nabla P_t^c \phi| (x)\leq \left(\frac{\lambda}{4\alpha}\right)^\frac12 \| B^{-1}A\Pi_l \|_{op} \|\phi\|_{\infty}
 + 2 \|\nabla \phi\|_\infty \exp(-\alpha t),
\end{align}  
which is a sufficient condition for the asymptotical strong Feller property, see Proposition 3.12 \cite{HaMa-06}.
Although the asymptotic strong Feller property has been proved
in Proposition 4.3 \cite{GuMa-15}, the estimate like \eqref{eq-1.11-0509} has not been proved. So a new proof is given 
for Proposition 4.3 \cite{GuMa-15} by our result. 
\\
{\rm (ii)} From the asymptotical strong Feller property, it follows that
any two different ergodic invariant measures 
must have disjoint topological supports, see Theorem 3.16 \cite{HaMa-06}.  
\\
{\rm (iii)} The uniqueness of invariant 
measures of $P_t^c$ is proved by showing the asymptotical strong Feller property and weakly topological irreducibility in \cite{GuMa-15}. 
From the proof of  Theorem \ref{thm-1.2},  we see that ``N'' in
the assumption {\bf (A2)} for the asymptotical strong Feller property 
can be chosen a little smaller than that in Proposition 4.3 \cite{GuMa-15} (because of the factor $\pi^{2}$) since their assumption was not completely optimal. 
In addition,  the uniqueness of invariant 
measures can be also easily shown by \eqref{eq-2.7-0512} under the assumption of Theorem \ref{thm-2.2-190902}.
\end{rem}

Theorem \ref{thm-1.2}  will be proved using  the
asymptotic coupling  by change of measures. Let first us construct
the asymptotic coupling.
Let us consider the coupling stochastic partial differential equation
\begin{align}\label{spde-3}
\left\{
\begin{aligned}
dv^n(t)= 
& -\frac{1}{2} A
\big\{A v^n (t) -p_n(v^n(t)) +\lambda \Pi_h v^n(t)  +\lambda \Pi_l u^n(t) \big\}dt \\
& + Bd{W}(t), \ \  t>0,\\
v^n(0) = &y,
\end{aligned}
\right.
\end{align}
where $\Pi_h =I-\Pi_l$. By the similar arguments to \eqref{spde-2}, one can show that 
\eqref{spde-3} has a unique mild solution  $v^n$ such that
$v^n \in C([0,\infty); H) \cap L^{2n+2} ((0,\infty) \times (0,1)) \ a.s.$ Furthermore, we know that the mass of 
$v^n(t)$ is conservative in $t\geq 0$ by considering the assumption {\bf (A1)}.

\begin{lem} \label{lem-1.3}
The  solution  $u^n(t; x)$ of \eqref{spde-2} and the solution $v^n(t;y)$
of \eqref{spde-3} are asymptotically coupling 
in the following sense:
\begin{align}\label{eq-1.11} 
|u^n(t; x) -v^n(t;y)|_{-1}  \leq \exp(-\alpha t) |x-y|_{-1}, \ x, y \in {\bf H}^c.
\end{align}

\end{lem}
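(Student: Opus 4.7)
The plan is to exploit the synchronous coupling: since \eqref{spde-2} and \eqref{spde-3} are driven by the same Wiener process $W$, the difference $w^n(t) := u^n(t;x) - v^n(t;y)$ satisfies a pathwise \emph{deterministic} evolution equation in which the $B\,dW$ term cancels. Subtracting the two equations and using the identity $\lambda u^n - \lambda \Pi_h v^n - \lambda \Pi_l u^n = \lambda \Pi_h w^n$, I would write
\begin{align*}
\partial_t w^n = -\tfrac{1}{2} A \bigl\{ A w^n - [p_n(u^n) - p_n(v^n)] + \lambda \Pi_h w^n \bigr\}.
\end{align*}
Since $x, y \in \mathbf{H}^c$ and {\bf (A1)} ensures both $\overline{u^n}(t) \equiv \bar x$ and $\overline{v^n}(t) \equiv \bar y = \bar x$, we have $\overline{w^n}(t) \equiv 0$, so $w^n(t)$ lies in the mean-zero subspace where $(-A)^{-1}$ is well defined.

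Next I would compute $\tfrac{d}{dt}|w^n|_{-1}^2 = 2\langle w^n, (-A)^{-1}\partial_t w^n\rangle$. Because $(-A)^{-1}\bigl(-\tfrac{1}{2}A Y\bigr) = \tfrac12(Y-\bar Y)$ and $\langle w^n, \bar Y\, e_0\rangle = \bar Y\,\overline{w^n} = 0$, the constant contribution drops out and
\begin{align*}
\tfrac{d}{dt}|w^n|_{-1}^2 = -|w^n|_1^2 - \langle w^n, p_n(u^n) - p_n(v^n)\rangle + \lambda |\Pi_h w^n|^2.
\end{align*}
The monotonicity of $p_n$ forces the middle term to be non-negative, so
\begin{align*}
\tfrac{d}{dt}|w^n|_{-1}^2 \leq -|w^n|_1^2 + \lambda |\Pi_h w^n|^2.
\end{align*}
The main technical step is a spectral-gap comparison: expanding $w^n=\sum_{k\geq 1} w^n_k e_k$, one checks
\begin{align*}
|w^n|_1^2 - \lambda |\Pi_h w^n|^2 = \sum_{k=1}^N (k\pi)^2 |w^n_k|^2 + \sum_{k\geq N+1} \bigl[(k\pi)^2 - \lambda\bigr] |w^n_k|^2,
\end{align*}
and, using {\bf (A2)} which guarantees $(N+1)^2\pi^2 > \lambda$, each coefficient is bounded below by $2\alpha (k\pi)^{-2}$: for $k\leq N$ this needs $\alpha \leq \tfrac12(k\pi)^4$, for $k\geq N+1$ it needs $\alpha \leq \tfrac12 [(k\pi)^2-\lambda](k\pi)^2$, and both are achieved at the endpoints $k=1$ and $k=N+1$ by the choice of $\alpha$ in the statement.

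Combining these yields the differential inequality $\tfrac{d}{dt}|w^n|_{-1}^2 \leq -2\alpha |w^n|_{-1}^2$, and Gronwall delivers $|w^n(t)|_{-1} \leq e^{-\alpha t}|x-y|_{-1}$, as claimed. The step I expect to require the most care is justifying the chain rule for $|w^n|_{-1}^2$ since $u^n, v^n$ are only mild solutions; this is handled either by a standard Galerkin/Yosida regularization argument followed by passage to the limit, or by appealing to the variational framework available thanks to the polynomial growth of $p_n$ (which makes $w^n$ sufficiently regular for the formal computation to be rigorous). Everything else — monotonicity, the spectral estimate, and Gronwall — is elementary once the pathwise cancellation of the noise is in hand.
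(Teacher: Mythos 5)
Your proposal is correct and follows essentially the same route as the paper: synchronous coupling so the noise cancels, the mean-zero property of the difference, the energy identity for $|\cdot|_{-1}^2$, monotonicity of $p_n$ to discard the nonlinear term, the spectral comparison yielding the factor $2\alpha$ (your coefficient-wise estimate in the eigenbasis is exactly the paper's pair of inequalities for $\Pi_l$ and $\Pi_h$), and Gronwall. The regularization you flag as the delicate step is handled in the paper precisely by the finite-dimensional Galerkin projection $X^{n,K}$ followed by letting $K\to\infty$.
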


%%%%%%%%%%%%%%%%%%% Proof
\begin{proof}
By the density of $L^2(0,1)$ in ${\bf H}$, it is enough for us to show 
\eqref{eq-1.11} holds for any  $x, y \in L^2(0,1)$ whenever $\bar{x} =\bar{y}=c$.  For simplicity of notations, we write 
$u^n(t)$ for $u^n(t; x)$ and  respectively $v^n(t)$ for $v^n(t; y)$ 
in the following.

Let $X^n(t) =u^n(t)-v^n(t)$.  Then it is clear that 
$X^n(t)$ satisfies  
\begin{align}\label{eq-1.12}
\left\{
\begin{aligned}
dX^n(t)= 
& -\frac{1}{2} A
\big\{A X^n (t) -[p_n(u^n(t)) -p_n(v^n(t)) ] +\lambda \Pi_h X^n(t)   \big\}dt, \\
X^n(0) = &x-y.
\end{aligned}
\right.
\end{align}
Let us first point out that  $\overline{X^n}(t)=0$ for any $t\geq 0$ by the conservative properties of  $u^n(t)$ and $v^n(t)$, which will be used below.

Without loss of generality, we assume the integer $K>N$ and let us set
$$X^{n, K} (t) = \sum_{k=0}^K \langle u^n(t)-v^n(t), e_k \rangle e_k.$$ 
Then it is known that $X^{n, K} (t) \in D(A)\ a.s.$  Therefore, by \eqref{eq-1.12} and the spectral property of the operator $A$, 
\begin{align}\label{eq-1.13}
\frac{d}{dt}|X^{n, K}(t)|_{-1}^2 = &\langle AX^{n, K}(t),X^{n, K}(t) \rangle - 
\langle p_n(u^n(t)) -p_n(v^n(t)),  X^{n, K}(t)\rangle \\
&+ \lambda \langle \Pi_h X^{n, K}(t),  X^{n, K}(t) \rangle  \notag\\
= &- |X^{n, K}(t)|_{1}^2 - \langle p_n(u^n(t)) -p_n(v^n(t)),  X^{n, K}(t)\rangle \notag \\
&+ \lambda \langle \Pi_h X^{n, K}(t),  X^{n, K}(t) \rangle.  \notag
\end{align}
Let us note that  for any $u\in V_{1}$ with  $\bar{u}=0$,
 $$|u |_{1}^2 \geq \pi^2 |\Pi_l u|^2 + (N+1)^2 \pi^2 |\Pi_h u|^2.$$  
Recalling that $\overline{X^n}(t)=0$ and noting the increasing property of $p_n$,  then by  \eqref{eq-1.13}, we obtain that 
\begin{align} \label{eq-1.18-0501}
\frac{d}{dt}|X^{n,K}(t)|_{-1}^2 
\leq  & -\pi^2 |\Pi_lX^{n, K}(t)|^2 -\{ (N+1)^2 \pi^2 -\lambda\} 
|\Pi_h X^{n,K}(t)|^2.
\end{align}
Hence, using \eqref{eq-1.9-0413} in the assumption 
{\bf (A2)} and combining \eqref{eq-1.18-0501} with  the next relations 
$$|\Pi_l u|^2 \geq \pi^2 |\Pi_l u|_{-1}^2\ \text{and} \ 
|\Pi_h u|^2 \geq (N+1)^2 \pi^2 |\Pi_h u|_{-1}^2,\ u\in L^2(0,1),$$
we have that 
\begin{align*}
\frac{d}{dt}|X^{n,K}(t)|_{-1}^2 
\leq  & -\pi^4 |\Pi_lX^{n,K}(t)|_{-1}^2 - \{(N+1)^2 \pi^2 -\lambda \}
(N+1)^2\pi^2 |\Pi_h X^{n, K}(t)|_{-1}^2  \\
\leq & -2\alpha |X^{n,K}(t)|_{-1}^2.
\end{align*} 
Finally, letting $K \to \infty$ in the above inequality, we have
\begin{align*}
\frac{d}{dt}|X^{n}(t)|_{-1}^2 
\leq & -2\alpha |X^{n}(t)|_{-1}^2, \notag
\end{align*} 
which obviously implies the desired result \eqref{eq-1.11}.
\end{proof}

From now on, let us now formulate the proof of Theorem \ref{thm-1.2}.
\begin{proof}[Proof of Theorem \ref{thm-1.2}]
  
Let us set
\begin{align*}
\xi(t)=\xi^n(t):= \frac{\lambda}{2} B^{-1}A\Pi_l (u^n(t) -v^n(t)),  \ t\geq 0.
\end{align*}
Although $\xi^n(t)$ depends on $n$, we will omit the superscript $n$, because  uniform estimates on $n$ can be shown as below.
 
By Lemma \ref{lem-1.3}, it goes that
\begin{align}\label{eq-1.18-0414}
|\xi(t)| \leq & \frac{\lambda}{2}\| B^{-1}A\Pi_l \|_{op} |u^n(t) -v^n(t)|_{-1} \\
       \leq &  \frac{\lambda}{2} \| B^{-1}A\Pi_l \|_{op} \exp(-\alpha t)|x-y|_{-1}. \notag
\end{align}
Therefore, by the  Novikov condition, we have that
\begin{align*}
M(t) =\exp\left(\int_0^t \langle \xi(s), dW(s) \rangle 
- \frac12 \int_0^t |\xi(s)|^2ds  \right)
\end{align*}
is a real-valued martingale and then by the Girsanov theorem,
\begin{align*}
\widetilde{W}(t) = W(t) -\int_0^t \xi(s)ds,  \ t\geq 0
\end{align*} 
is a cylindrical Wiener process on $L^2(0,1)$
under the probability $\tilde{\mathbb{P}}$ defined by
$$
\frac{d\tilde{\mathbb{P}}}{d\P}\Big|_{\mathcal{F}_t} =M(t). 
$$
According to the definition of $\xi(t)$, we point out that 
$M(t), \tilde{W}(t)$ and $\tilde{\P}$ are depending on $n$.
For our goal, uniform estimates on $n$ should be established.
  
Now by using the stochastic processes 
$(\widetilde{W}(t))_{t\geq 0}$
 and $(\xi(t))_{t\geq 0}$,  the coupling equation \eqref{spde-3} can be rewritten as 
\begin{align}\label{eq-1.21-0413}
\left\{
\begin{aligned}
dv^n(t)= 
& -\frac{1}{2} A
\big\{ A v^n (t) -p_n(v^n(t)) +\lambda  v^n(t)  \big\}dt 
+ Bd\widetilde{W}(t), \ \  t>0,\\
v^n(0) = &y.
\end{aligned}
\right.
\end{align}
In particular, by the  uniqueness in law of the solution of \eqref{spde-3}, it is known  that the distribution  of $v^n(t)$ under $\tilde{\P}$ is same as that of $u^n(t;y)$ under $\P$.

We first note that  for any positive $\phi \in B_b({\bf H}^c)$ with $\|\nabla \log \phi \|_\infty< \infty$, 
\begin{align} \label{eq-1.22-0414}
P_t^{n,c}  \log \phi(y) = &\mathbb{E}^{\tilde{\P}} \left[\log \phi (v^n(t)) \right]\\
  = &\mathbb{E}^{\tilde{\P}}[\log \phi (u^n(t;x))] +
  \left\{\mathbb{E}^{\tilde{\P}}\left[\log \phi (v^n(t)) \right]
- \mathbb{E}^{\tilde{\P}} \left[\log \phi (u^n(t;x)) \right] \right\} \notag \\
:= & I_1^n(t) +I_2^n(t), \notag
\end{align}
where $\mathbb{E}^{\tilde{\P}}$ denotes the expectation with respect to 
$\tilde{\P}$.

Using the definition of  $\tilde{\P}$ and the martingale property of $(M(t))_{t\geq 0}$, we have that
\begin{align}\label{eq-1.23-0414}
 I_1^n(t) = & \mathbb{E}[M(t)\log \phi (u^n(t;x))]\\
   \leq &\mathbb{E}[M(t)\log M(t)] - \mathbb{E}[M(t)] \log \mathbb{E}[M(t)]+\mathbb{E}[M(t)]\log \mathbb{E}[\phi (u^n(t;x))]  \notag\\
=  &\mathbb{E}[M(t)\log M(t)] +\mathbb{E}[M(t)]\log \mathbb{E}[ \phi (u^n(t;x)) ] \notag \\
= &\mathbb{E}[M(t)\log M(t)] +\log P_t^{n, c} \phi (x),\notag
\end{align}
where  the Young inequality 
\begin{align}\label{eq-1.27-0504}
\mathbb{E}[XY] \leq 
\mathbb{E}[X\log X]-\mathbb{E}[X]\log E[X] + \mathbb{E}[X]\log\mathbb{E}[e^Y]
\end{align}
 for any  non-negative random variables $X, Y \geq 0\ a.s.$ with $\mathbb{E}[X]>0$ has be used for the second line; see Lemma 2.4 \cite{ArTaWa-09} for its  proof.

On the other hand, using \eqref{eq-1.18-0414}, we deduce that
\begin{align*}
\mathbb{E}[M(t)\log M(t)]= & \mathbb{E}^{\tilde{\P}}[\log M(t)]  \\
=&\mathbb{E}^{\tilde{\P}} \left[ \int_0^t \langle \xi(s), dW(s) \rangle
 - \frac12\int_0^t |\xi(s)|^2ds \right]
\notag\\
=&\mathbb{E}^{\tilde{\P}} \left[ \int_0^t \langle \xi(s), 
d\widetilde{W}(s) \rangle + \frac12\int_0^t |\xi(s)|^2ds \right]
\notag\\
= &\frac12 \mathbb{E}^{\tilde{\P}} \left[ \int_0^t |\xi(s)|^2ds \right] \notag\\
\leq &   \frac{\lambda}{4} \mathbb{E}^{\tilde{\P}} 
\left[\int_0^t  \| B^{-1}A\Pi_l \|_{op}^2 \exp(-2\alpha s)|x-y|_{-1}^2 ds\right] \notag \\
= & \frac{\lambda}{8\alpha} (1- \exp(-2\alpha t)) \| B^{-1}A\Pi_l \|_{op}^2 |x-y|_{-1}^2.
\end{align*}
Hence, plugging this estimate into \eqref{eq-1.23-0414}, we have
\begin{align}\label{eq-1.24-0414}
 I_1^n(t) \leq  
&\frac{\lambda}{8\alpha} (1- \exp(-2\alpha t)) \| B^{-1}A\Pi_l \|_{op}^2 |x-y|_{-1}^2+\log P_t^{n, c} \phi (x).
\end{align}

 Let us now give the required estimate for $I_2(t)$, which is easier. In fact, by Lemma \ref{lem-1.3}, we have that 
\begin{align} \label{eq-1.25-0414}
| I_2^n(t) | \leq &  \|\nabla \log \phi \|_\infty \mathbb{E}^{\tilde{\P}} 
[|u^n(t)-v^n(t)|_{-1}]  \\
\leq & 
\exp(-\alpha t) \|\nabla \log \phi \|_\infty   |x-y|_{-1}. \notag
\end{align}
Inserting \eqref{eq-1.24-0414} and \eqref{eq-1.25-0414} into 
\eqref{eq-1.22-0414}, we see that for any $n\in \mathbb{N}$
\begin{align*}
P_t^{n, c}  \log \phi(y) \leq & \log P_t^{n, c} \phi(x)
 +\frac{\lambda}{8\alpha} (1- \exp(-2\alpha t))\| B^{-1}A\Pi_l \|_{op}^2 |x-y|_{-1}^2 \\
& +  \exp(-\alpha t)\|\nabla \log \phi \|_\infty |x-y|_{-1},  \ t>0 \notag
\end{align*}
holds for any  $x, y \in {\bf H}^c$ and any positive $\phi \in B_b({\bf H}^c)$ with $\|\nabla \log \phi \|_\infty< \infty$.

Consequently,
noting that $\| B^{-1}A\Pi_l \|_{op}$ is independent of $n$ and then
using Theorem \ref{thm-1.1}, we can obtain the desired result \eqref{eq-1.10-0414} by letting $n \to \infty$. Therefore, the proof of Theorem \ref{thm-1.2} is completed.
\end{proof}

%%%%%%%%%%%%%%%%%%%%%%%%%%%%%%%%%%%%%%%%%%%%%%%%
%%%%%

\section{Harnack inequality for the case of nondegenerate space-time white noise}
\label{sec-3}
In this section, we will intend to study the properties of the Markov semigroup generated by the SPDE \eqref{sch-1} 
for the special case of  $B=\frac{d}{d \theta}$ 
(or equivalently $B=(-A)^\frac12$, see Remark 3.1 below) 
with its domain 
$H^1(0,1)$, which is studied in \cite{DeGo-11}. 
Let us recall the definition of solution of \eqref{sch-1} according to
Definition 1.1  \cite{DeGo-11}. 

\begin{defn}
Let the initial datum $x$ be a continuous function defined on 
$[0,1]$ with its values in  $[-1,1]$, i.e., $x\in C([0,1]; [-1,1])$.  \\
$(1)$ The quadruplet  
$(u(\cdot), \eta_+, \eta_-, W)$ defined on a filtered complete probability space 
$(\Omega, \mathcal{F}, (\mathcal{F}_t)_{t\geq 0}; \P)$ is said to 
be a weak solution of \eqref{sch-1} with its initial value $x$ if all of the following conditions are satisfied: \\
{\rm (i)}
The stochastic process $u(\cdot)\in C((0, T] \times [0,1]; [-1,1])  \cap C([0,1]; {\bf H})\  a.s.$ with 
$u(0)=x$, 
and $f(u) \in L^1([0,T] \times [0,1]) \ a.s.$ for any $T>0$. \\
{\rm (ii)}  $\eta_+$ and $\eta_-$ are two positive random measures on 
$[0, \infty) \times [0,1]$ satisfying  the following property:
$$\eta_{\pm}([\delta, T] \times [0,1])< \infty\ a.s. \
\text{ for all} \ 
\delta \in (0, T]\ \text{ and}\   T>0.$$ 
{\rm (iii)} $(W(t))_{t\geq 0}$ is a cylindrical Wiener process on $L^2(0,1)$. Moreover, the initial value $x$ is independent of
 $(W(t))_{t\geq 0}$  and the stochastic process  $(u(t), W(t))_{t\geq 0}$ is 
$(\mathcal{F}_t)$-adapted.\\
{\rm (iv)} For all $\phi\in D(A^2)$ and  $0 < \delta < t$,
\begin{align}\label{eq-2.1-0507}
\langle u(t), \phi\rangle =& \langle u(\delta), \phi \rangle 
-\frac12 \int_\delta^t \langle u(s), A^2\phi \rangle ds
-\frac12 \int_\delta^t \langle f(u(s)), A \phi \rangle ds    \\
& -\frac12 \int_\delta^t \int_0^1 A\phi(\theta) \eta_+(dsd\theta) +\frac12 \int_\delta^t \int_0^1 A \phi (\theta) \eta_-(dsd\theta)  \notag\\
& +\int_\delta^t \langle B^*\phi, dW(s) \rangle \ \ a.s. \notag
\end{align}
{\rm (v)} The contact properties ${\rm supp}(\eta_+)\subset 
\{(t, \theta) \in [0, \infty) \times [0,1]: \ u(t, \theta)=+1\}$ and 
${\rm supp}(\eta_-)\subset 
\{(t, \theta) \in [0, \infty) \times [0,1]: \ u(t,\theta)=-1\}$ hold almost surely, that is,
$$ \int_0^\infty \int_0^1 (1-u(t, \theta))\eta_+ (dtd \theta)= \int_0^\infty \int_0^1 (1+u(t, \theta))\eta_- (dtd \theta)=0  \ a.s.$$
$(2)$ A weak solution  $(u(\cdot), \eta_+, \eta_-, W)$ is said to be a strong one if 
the stochastic process $(u(t))_{t\geq 0}$ is adapted to the natural filtration $(\mathcal{F}_t)_{t\geq 0}$ generated by $(W(t))_{t\geq 0}$.  
\end{defn}
The term  $ \langle f(u(s)), Ah \rangle$ appearing in
 the right hand side of \eqref{eq-2.1-0507} should be understood in a duality between $L^1$ and $L^\infty$. In fact, it is assumed that $f(u(t)) \in L^1([0,T]\times [0,1]) \ a.s.$ for any fixed $T$ in 
{\rm(i)}. 
 In addition, for the uniqueness of the solution, we mean the pathwise 
uniqueness, that is,
for any two solutions $(u^i, \eta_+^i, \eta_-^i, W), i=1,2$ of 
\eqref{sch-1} with same initial data defined on the same probability 
space with same $W$, then $(u^1, \eta_+^1, \eta_-^1) =(u^2, 
\eta_+^2, \eta_-^2)\ a.s.$

Now let us summarize main results obtained in \cite{DeGo-11}, which will be used in the following.  For brevity, in this section,  we will use the same notations  introduced in Section \ref{sec-3}. To
emphasize the initial value, $u(t; x)$ or $u(t, \cdot;x)$ will be used
according to purposes in the sequel.
\begin{thm} \label{thm-2.1-0506}
For any  $c\in (-1,1)$ and $x\in K:=\{x\in L^2(0,1): x\in [-1,1]\}$ with 
$\bar{x}=c$, the SPDE \eqref{sch-1} has a unique strong solution $(u(\cdot; x); \eta_+, \eta_-, W)$. Moreover,  the following hold:\\
{\rm (i)} The mass of $u(t; x)$ is conservative in $t$, that is, $\bar{u}(t; x) =\bar{x}$ for all $t>0$.\\
{\rm(ii)} $(u(t;x); t\geq0, x\in K\cap {\bf H}^c)$ is a $K\cap {\bf H}^c$-valued continuous Markov 
process and its associated Markov transition semigroup $P_t^c$ is strong Feller on ${\bf H}^c$. 
\\
{\rm(iii)} For each $c\in(-1,1)$, 
$$\nu^c(dx) =\frac{1}{Z^c}\exp 
\left(-\int_{0}^1 F(x(\theta))d \theta \right)1_{K}(x) \mu_c(dx)$$ 
is the unique invariant measure of $P_t^c$,
where $\mu^c$ denotes the Gaussian measure $N(ce_0, (-A)^{-1})$ 
and $Z^c$ denotes the normalization constant.
\\
{\rm(iv)}  For any $k\in  \mathbb{N}$ and $0=t_0 < t_1 < t_2 < \cdots < t_k$, $(u^n(t_i;x))_{i=1}^k$
converges weakly to $(u(t_i;x))_{i=1}^k$ as $n \to \infty$. 
In particular, for any $\phi \in B_b({\bf H}^c)$ and $t\geq 0$, 
we have 
$\lim_{n\to \infty} P_t^{n, c}\phi(x)=P_t^c\phi(x)$. 
Hereafter, $u^n(t;x)$ and
$P_t^{n, c}$ denote the solution of  \eqref{spde-1} with $B=\frac{d}{ d\theta}$ and its associated Markov semigroup.  
\end{thm}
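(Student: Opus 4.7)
Since this theorem collects results from \cite{DeGo-11}, my plan is to follow the polynomial-approximation-plus-reflection strategy used there. The natural starting point is the regularized family \eqref{spde-1} with $B=(-A)^{1/2}$, whose solutions $u^n$ are globally well-posed by the standard theory for stochastic Cahn-Hilliard equations with locally Lipschitz polynomial nonlinearity, and then to pass to the limit $n\to\infty$ after extracting uniform estimates.

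First I would derive uniform-in-$n$ a priori bounds. The key quantities are $\mathbb{E}|u^n(t)|_{-1}^2$, $\mathbb{E}\int_0^T|u^n(s)|_1^2\,ds$ and $\mathbb{E}\int_0^T\int_0^1|f(u^n)|\,d\theta\,ds$. These come from It\^o's formula applied to $|u^n|_{-1}^2$ and to the Ginzburg-Landau energy $\mathcal{E}(u^n)$, using the fact that $\mathrm{Tr}_{-1}={\rm Tr}((-A)^{1/2}(-A)^{-1}(-A)^{1/2})=\sum_{j\ge 1}(j\pi)^{-2}<\infty$, which is exactly what makes $B=(-A)^{1/2}$ admissible on $\mathbf{H}$. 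A Skorokhod-type tightness argument in $C([0,T];\mathbf{H})$ then produces a limit $u$, while the singular parts of $p_n(u^n)$ localized near $\{u^n\approx\pm 1\}$ concentrate and yield the non-negative measures $\eta_\pm$ in the limit. Verification of conditions (i)--(v) of the solution definition, in particular the contact property ${\rm supp}(\eta_\pm)\subset\{u=\pm 1\}$, is the main technical work done in \cite{DeGo-11}. Pathwise uniqueness is handled by comparing two solutions in $|\cdot|_{-1}$: monotonicity of $f$ on $(-1,1)$ gives $\langle f(u^1)-f(u^2),u^1-u^2\rangle_{-1}\ge -\lambda|u^1-u^2|_{-1}^2$, and the reflection terms contribute $-\int(1-u^1)\,d\eta_+^2-\int(1+u^1)\,d\eta_-^2\le 0$, so Gronwall closes the argument. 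Mass conservation (i) then follows by testing \eqref{eq-2.1-0507} against $\phi=e_0$: since $Ae_0=0$ and $B^*e_0=(-A)^{1/2}e_0=0$, every term on the right-hand side vanishes.

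For the strong Feller property (ii), I would apply a Bismut-Elworthy-Li type formula to the approximating semigroup $P_t^{n,c}$. Because $B=(-A)^{1/2}$ is non-degenerate on every eigenmode $e_j$, $j\ge 1$, and the equation evolves on the affine space $\mathbf{H}^c$ where only the modes $j\ge 1$ matter, the noise is genuinely elliptic in the relevant directions; this produces a bound $|\nabla P_t^{n,c}\phi|\le C(t)\|\phi\|_\infty$ with $C(t)$ independent of $n$, and (iv) transports the estimate to $P_t^c$. For (iii), the Gaussian reference $\mu^c=N(ce_0,(-A)^{-1})$ is the invariant measure of the linear Ornstein-Uhlenbeck semigroup $du=-\tfrac12 A^2 u\,dt+(-A)^{1/2}\,dW$ on $\mathbf{H}^c$; a Dirichlet-form / integration-by-parts computation shows that multiplying the density by $Z^c{}^{-1}\exp(-\int F)\mathbf{1}_K$ perturbs the generator precisely by $-\tfrac12 A f(u)$ together with the boundary/reflection terms at $\{u=\pm 1\}$, which is exactly the drift of \eqref{sch-1}. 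Uniqueness of $\nu^c$ then follows by combining the strong Feller property with the irreducibility argument of \cite{DeGo-11}.

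Statement (iv) is obtained from tightness of the finite-dimensional marginals $(u^n(t_1),\dots,u^n(t_k))$, which follows from the uniform estimates, together with identification of any limit via the pathwise uniqueness established above. The main obstacle throughout is the joint limit in the drift $-p_n(u^n)+\lambda u^n$ and in the emerging reflection measures $\eta_\pm$: one must show that no mass of $|f(u^n)|$ is lost in the interior $(-1,1)$ and that the explosive part concentrates exactly on $\{u=\pm 1\}$ to produce valid non-negative measures satisfying the contact property. This requires the fine estimates of \cite{DeGo-11}, which are specific to the choice $B=(-A)^{1/2}$ because less regular noise would destroy the space-time continuity of $u$ that underlies the definition of the contact set.
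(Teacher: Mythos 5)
The paper gives no proof of this theorem: it is stated explicitly as a summary of results from \cite{DeGo-11}, and your outline is a faithful reconstruction of the polynomial-approximation-plus-reflection strategy of that reference (uniform $|\cdot|_{-1}$ and energy estimates, tightness, emergence of $\eta_\pm$ from the concentrating part of $p_n(u^n)$, pathwise uniqueness via monotonicity and the contact conditions, mass conservation by testing with $e_0$, and the Gibbs-type characterization of $\nu^c$), with the genuinely delicate steps correctly attributed to \cite{DeGo-11} rather than reproved. Since both you and the paper ultimately defer to the same external source, your sketch is consistent with the paper's treatment and no gap needs to be flagged.
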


\begin{rem}\label{rem-3.1-190830}
{\rm (i)}   $(-A)^{-1}$ appearing in {\rm (iii)} denotes the inverse of 
$-A$ from $L_0^2$ to $L_0^2$. From Lemma 2.1 \cite{DeZa-07},
it is known that $\mu^c$ is the distribution of the Gaussian process $(B(\theta) -\bar{B}+c)_{\theta\in [0,1]}$
on $C([0,1])$, where $(B(\theta))_{\theta\in [0,1]}$ denotes a standard Brownian motion and $\bar{B} =\int_0^1 B(\theta)d\theta$.
\\
{\rm (ii)} Noting that $\frac{d}{d\theta}\dot{W}(t,\theta)$ and 
$(-A)^\frac12\dot{W}(t,\theta)$ have the 
same covariance structure, we see that it is equivalent for us to
consider $B=(-A)^\frac12$ in \eqref{sch-1}  instead of 
$\frac{d}{d\theta}$ and note that $(-A)^\frac12$ is symmetric. 
So, for simplicity, we will consider $B=(-A)^\frac12$ 
in the sequel and we know that Theorem 
\ref{thm-2.1-0506} still holds.
\end{rem}

\begin{lem}\label{lem-2.2-0507}
Let $B=(-A)^\frac12$. Then $B$ is reversible on 
${\rm span}\{e_i: i=1,2,\cdots\}$ and 
\begin{align*}
|B^{-1}z|^2 =|z|_{-1}^2, \ z\in {\bf H}^0.
\end{align*}
\end{lem}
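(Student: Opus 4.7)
The plan is to use the spectral decomposition of $A$ and the Fourier series representation of elements in ${\bf H}^0$. Since $Ae_n = -(n\pi)^2 e_n$ for $n\geq 0$ with $e_0$ in the kernel of $A$, the functional calculus yields $Be_n = (-A)^{1/2}e_n = (n\pi)e_n$ for $n\geq 1$ and $Be_0 = 0$. In particular, on the closure of $\mathrm{span}\{e_i : i\geq 1\}$ the operator $B$ is diagonal with strictly positive eigenvalues $\{n\pi\}_{n\geq 1}$ bounded away from zero, hence it is injective with a well-defined inverse given by $B^{-1}e_n = (n\pi)^{-1}e_n$. This establishes the reversibility (invertibility) claim.

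Next, for $z \in {\bf H}^0$, the condition $\bar{z}=0$ means $\langle z,e_0\rangle=0$, so the Fourier expansion reduces to $z = \sum_{n=1}^{\infty} z_n e_n$ with coefficients $z_n=\langle z,e_n\rangle$ satisfying $\sum_{n=1}^{\infty}(n\pi)^{-2}z_n^2<\infty$ by definition of ${\bf H}=V_{-1}$. Applying $B^{-1}$ term by term gives
\begin{equation*}
B^{-1}z = \sum_{n=1}^{\infty}(n\pi)^{-1} z_n e_n,
\end{equation*}
and Parseval's identity for the orthonormal basis $\{e_n\}$ in $L^2(0,1)$ yields
\begin{equation*}
|B^{-1}z|^2 = \sum_{n=1}^{\infty}(n\pi)^{-2} z_n^2.
\end{equation*}
Comparing with the defining formula $|z|_{-1}^2 = |(-A)^{-1/2}z|^2 = \sum_{n=1}^{\infty}(n\pi)^{-2}z_n^2$ gives exactly the asserted identity $|B^{-1}z|^2 = |z|_{-1}^2$.

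There is essentially no obstacle here: the lemma is a direct consequence of how the fractional powers of $-A$ act on the orthonormal eigenbasis, together with the restriction to $\bar{z}=0$ which removes the kernel direction $e_0$. The only minor point to keep clean is to note that $B$ annihilates $e_0$ so invertibility is only claimed on $\mathrm{span}\{e_i: i\geq 1\}$, which matches the codomain assumption $z\in{\bf H}^0$ under which $B^{-1}z$ is well-defined.
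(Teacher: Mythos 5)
Your proof is correct and is exactly the argument the paper has in mind: the paper's own proof is a one-line remark that the claim follows from the definitions of $A$ and the seminorm $|\cdot|_{\gamma}$, and your diagonalization $Be_n=(n\pi)e_n$, $B^{-1}e_n=(n\pi)^{-1}e_n$ together with Parseval is precisely that computation written out. No issues.
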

\begin{proof}
Recalling the definition of the operator $A$ and the seminorm $|\cdot |_{\gamma}$, we can easily proof this lemma.
\end{proof}

The following is the main result of this section. Since the mass 
of the solution to \eqref{sch-1} is required to be conserved, the 
well-known approaches used for the stochastic partial 
differential equation with additive noise, see \cite{Wan-07,
 Wan-17, Xie-19,  Zh-10} for example,  
can not applied to our case.  Moreover, the case of double-well potential is covered. 
To show our main result, we make use of  the approach 
initially introduced in \cite{Wan-11}, in which the stochastic different equations with 
multiplicative noise is studied.

\begin{thm} \label{thm-2.1} 
Suppose $\pi^2> \lambda$. Then the Harnack inequality with power $p>1$ 
\begin{align} \label{eq-2.1-0504}
|P_t^{c}\phi |^p(y) \leq   P_t^{c}|\phi|^p (x) 
\exp\left\{\frac{p (\pi^2-\lambda)\pi^2 |x-y|_{-1}^2 }{2(p-1)(e^{(\pi^2-\lambda)\pi^2 t}-1) } \right\} 
\end{align}
holds for any $\phi \in B_b({\bf H}^c)$, $x, y\in K\cap {\bf H}^c$
and $t>0$. 
In particular,
the log-Harnack inequality 
\begin{align} \label{eq-2.2-0504}
P_t^{c}\log \phi(y) \leq   \frac{ (\pi^2-\lambda)\pi^2 |x-y|_{-1}^2 }{2(e^{(\pi^2-\lambda)\pi^2 t}-1) } + \log P_t^{c}\phi(x)
\end{align}
holds for any $0<\phi \in B_b({\bf H}^c)$, $x, y\in K\cap {\bf H}^c$
and $t>0$.

\end{thm}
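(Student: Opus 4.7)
The plan is to prove \eqref{eq-2.1-0504} first for the polynomial-approximated semigroup $P_T^{n,c}$ and then pass to the limit $n\to\infty$ using Theorem \ref{thm-2.1-0506}(iv), by means of a coupling-with-change-of-measure argument in the spirit of \cite{Wan-11}. The classical additive-noise Harnack argument fails here because mass is conserved (both processes live on the same affine space ${\bf H}^c$), so the corrector drift has to be built from the mean-zero part of the difference; the invertibility of $B=(-A)^{1/2}$ on the mean-zero subspace, recorded in Lemma \ref{lem-2.2-0507}, is what makes this possible.

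Setting $a:=(\pi^2-\lambda)\pi^2>0$ and $\psi(t):=(e^{aT}-e^{at})/(e^{aT}-1)$ on $[0,T]$, we choose the gain $r(t):=-\psi'(t)/\psi(t)=ae^{at}/(e^{aT}-e^{at})$ and take $v^n$ to be the strong solution of
\begin{equation*}
  dv^n = -\tfrac12 A\bigl\{Av^n - p_n(v^n)+\lambda v^n\bigr\}dt + B\,dW + r(t)\bigl(u^n(t;x)-v^n(t)\bigr)dt,\quad v^n(0)=y.
\end{equation*}
Writing $X^n:=u^n(\cdot;x)-v^n$, which is mean-zero because $\bar x=\bar y=c$, the monotonicity of $p_n$ together with the Poincar\'e bounds $|X^n|_1^2\ge\pi^2|X^n|^2\ge\pi^4|X^n|_{-1}^2$ (valid on the mean-zero subspace) yield the pathwise inequality
\begin{equation*}
  \frac{d}{dt}|X^n(t)|_{-1}^2 \le -\bigl(a+2r(t)\bigr)|X^n(t)|_{-1}^2.
\end{equation*}
Gronwall's lemma combined with $\int_0^t r(s)\,ds = -\log\psi(t)$ gives the \emph{deterministic} bound $|X^n(t)|_{-1}^2 \le e^{-at}\psi(t)^2|x-y|_{-1}^2$; since $\psi(T)=0$, this forces $u^n(T;x)=v^n(T)$ almost surely.

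Next we apply a Girsanov change of measure. By Lemma \ref{lem-2.2-0507}, $\alpha(t):=r(t)B^{-1}X^n(t)$ is well-defined and $|\alpha(t)|^2=r(t)^2|X^n(t)|_{-1}^2$; substituting the pointwise bound above and using $r^2\psi^2=(\psi')^2$ yields the deterministic estimate
\begin{equation*}
  \int_0^T|\alpha(s)|^2\,ds \le |x-y|_{-1}^2\int_0^T(\psi'(s))^2 e^{-as}\,ds = \frac{a\,|x-y|_{-1}^2}{e^{aT}-1}.
\end{equation*}
Hence $M(T):=\exp\bigl(-\int_0^T\langle\alpha,dW\rangle-\tfrac12\int_0^T|\alpha|^2\,ds\bigr)$ is a true martingale, and under $d\tilde\P:=M(T)\,d\P$ the law of $v^n$ coincides with that of $u^n(\cdot;y)$ driven by $\widetilde W:=W+\int_0^\cdot\alpha\,ds$. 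Combining $v^n(T)=u^n(T;x)$ with H\"older's inequality at exponents $(p,p/(p-1))$ on $P_T^{n,c}\phi(y)=\mathbb{E}[M(T)\phi(u^n(T;x))]$, and using the exponential moment bound for $M(T)^{p/(p-1)}$ that follows from the deterministic control of $\int|\alpha|^2$, produces the Harnack inequality for $P_T^{n,c}$ with exactly the constant $p\,a\,|x-y|_{-1}^2/[2(p-1)(e^{aT}-1)]$. Theorem \ref{thm-2.1-0506}(iv) then transfers the estimate to $P_T^c$, giving \eqref{eq-2.1-0504}. The log-Harnack \eqref{eq-2.2-0504} follows in parallel by replacing H\"older with the entropy inequality $\mathbb{E}[M\log\phi]\le\log\mathbb{E}[\phi]+\mathbb{E}[M\log M]$ and noting $\mathbb{E}[M\log M]=\tfrac12\mathbb{E}^{\tilde\P}\bigl[\int_0^T|\alpha|^2\,ds\bigr]\le a|x-y|_{-1}^2/[2(e^{aT}-1)]$.

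The principal technical difficulty is the blow-up of $r(t)$ as $t\uparrow T$, which makes the coupling SDE singular at the terminal time. The saving grace is that the drift itself, $r(t)X^n(t)$, remains uniformly bounded in ${\bf H}$ on $[0,T]$ by the matched asymptotics $r(t)|X^n(t)|_{-1} \lesssim |\psi'(t)|\,|x-y|_{-1}$, together with the uniform deterministic $L^2$-bound on $\alpha$; a standard localization to $[0,T-\e]$ followed by $\e\downarrow 0$ yields both well-posedness of the coupling and the martingale property of $M(T)$ needed for Girsanov. The passage $n\to\infty$ is straightforward because none of the constants depend on $n$.
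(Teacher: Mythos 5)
Your proposal is correct and follows essentially the same coupling-by-change-of-measure argument as the paper: the same singular corrector drift built from the mean-zero difference (your gain $r(t)$ is exactly the paper's $1/\gamma(t)$ with the optimal parameter $\alpha=1$ already substituted), the same Girsanov density with the deterministic $L^2$-bound $\int_0^T|\alpha|^2\,ds\le a|x-y|_{-1}^2/(e^{aT}-1)$, the same H\"older/entropy step, and the same passage $n\to\infty$ via Theorem \ref{thm-2.1-0506}. The only cosmetic differences are that you deduce the coupling success from a pointwise Gronwall bound rather than the paper's contradiction argument based on $\int_0^T\gamma(t)^{-1}\,dt=\infty$, and you fix the optimal gain from the outset instead of carrying the free parameter $\alpha\in(0,2)$ and optimizing at the end.
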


\begin{proof}
Let us fix $T>0$ and let  $\gamma (t)$ be a continuously differentiable and strictly positive function on 
$[0, T)$ with $\gamma(T)=0$, which be specified later.  Let  $\aleph$ denote the projection of $\bf{H}$ to 
${\rm  span}\{e_i: i=1,2,3, \cdots\}$ and then consider the coupling stochastic partial differential equation
\begin{align}\label{eq-2.1-0501} 
\left\{
\begin{aligned}
dw^n(t)= 
& -\frac{1}{2} A
\big\{ A w^n (t) -p_n(w^n(t)) +\lambda  w^n(t) \big\} dt + \frac{\aleph(u^n(t) -w^n(t))}{\gamma(t)}dt \\
& + Bd{W}(t), \ \  t\in [0, T),\\
w^n(0) = &y,
\end{aligned}
\right.
\end{align}
where $(u^n(t))_{t\geq 0}$ denotes the solution of \eqref{spde-2}
with $B=(-A)^\frac12$.

Since $\aleph$ is a bounded linear operator, by following the 
arguments used in  \cite{DaDe-96}, one can show that 
for each initial value $y\in {\bf H}$, the SPDE \eqref{eq-2.1-0501}  has a unique solution $w^n$ up to the explosion time $\sigma^n$ such that 
$w^n \in C([0, \sigma^n \wedge T); H) \cap L^{2n+2}((0, \sigma^n \wedge T) \times (0,1))$ a.s., where 
$\sigma^n:= \lim_{k\to \infty} \sigma_k^n$ with 
$\sigma_k^n=\inf\{t\in[0,T): |w^n(t)|_{-1} \geq k\}$.

Moreover, the conservation of the average of $w^n(t)$ holds for 
$t\in [0, \sigma^n \wedge T)$. Indeed,  considering the mild  solution  of \eqref{eq-2.1-0501}, 
we have that  for any 
$x\in L^2(0,1)$ with $\bar{x}=c\in (-1,1)$ and $t\leq  \sigma_k^n \wedge T$, 
\begin{align*}
& \langle w^n(t), e_0 \rangle  \\
= &\langle e^{-\frac12 A^2t} x, e_0\rangle  +
\int_0^t \left\langle A e^{-\frac12 A^2(t-s)}
 \big[p_n(w^n(s)) -\lambda w^n(s) \big], e_0 \right\rangle ds  \\
& +
\int_0^t \left\langle e^{-\frac12 A^2(t-s)}  
\frac{ \aleph(u^n(s) -w^n(s))}{\gamma(s)},  e_0 \right\rangle ds 
+ \int_0^t \left\langle B e^{-\frac12 A^2(t-s)}e_0, dW(s) \right\rangle.  \notag
\end{align*}
Now noting that $ e^{-\frac12 A^2t}e_0 =e_0$ and $B e_0= \aleph e_0=0$, we obtain that  
$$\langle w^n(t), e_0 \rangle=\langle  x, e_0\rangle=c,\  t \in [0, \sigma_k^n \wedge T),$$
which clearly implies our claim by the density of $L^2$ in ${\bf H}$.
From now on, the proof will divided into three steps.

\noindent {\it Step 1:}  The goal of this step is to construct  a successful 
coupling up to time $T$. More precisely,  we will  show that $w^n(T;y) =u^n(T;x)$ holds almost surely under a probability measure  equivalent to $\P$. 

To show it, let us set $Y^n(t) =u^n(t) -w^n(t), \ t\leq  \sigma_k^n \wedge T$ and let $R\in (0,T)$ be fixed. Then by the conservation of the mass, we have that $\overline{Y^n}(t) =0$ whenever $x, y\in {\bf H}^c$ and 
$Y^n(t)$ satisfies 
\begin{align}\label{eq-2.2-0501} 
\left\{
\begin{aligned}
dY^n(t)= 
& -\frac{1}{2} A
\big\{A Y^n(t) -[p_n(u^n(t)) - p_n(w^n(t)) ] +\lambda  Y^n(t) \big\}dt \\
& - \frac{\aleph Y^n(t)}{\gamma(t)}dt,  
\ \  t\in [0,  \sigma_k^n \wedge r), \\
Y^n(0) = &x- y.
\end{aligned}
\right.
\end{align}
Then, using the increasing property of  $p_n$ and $\overline{Y^n}(t) =0$, we can deduce  analogously to \eqref{eq-1.18-0501} that 
\begin{align}\label{eq-2.3-0501}
d|Y^n(t)|_{-1}^2 \leq  & -|Y^n(t)|_{1}^2dt  + \lambda |Y^n(t)|^2dt
- \frac{2\langle (-A)^{-1} \aleph Y^n(t), Y^n(t) \rangle}{\gamma(t)}dt\\
=&  -|Y^n(t)|_{1}^2dt  + \lambda |Y^n(t)|^2dt
- \frac{2|\aleph Y^n(t)|_{-1}^2}{\gamma(t)}dt   \notag \\
\leq & -(\pi^2-\lambda)  |Y^n(t)|^2dt  -
 \frac{2|\aleph Y^n(t)|_{-1}^2}{\gamma(t)}dt \notag \\
\leq &  -(\pi^2-\lambda)\pi^2  |Y^n(t)|_{-1}^2dt 
 - \frac{2| Y^n(t)|_{-1}^2}{\gamma(t)}dt, \notag \  t\in [0,  \sigma_k^n \wedge R),
\end{align} 
where the assumption $\pi^2 >\lambda$ has been used for last inequality.

Hence, \eqref{eq-2.3-0501} and the chain rule give that
\begin{align}\label{eq-2.4-0501}
d \frac{|Y^n(t)|_{-1}^2}{ \gamma(t)} \leq 
-\frac{|Y^n(t)|_{-1}^2}{\gamma^2 (t)} 
\big(\gamma'(t) +(\pi^2-\lambda)\pi^2 \gamma(t) +2  \big)dt, \  t\in [0,  \sigma_k^n \wedge R],
\end{align}
where the strict positivity of $\gamma(t)$ has been used.

Now let us specify the function  $\gamma(t)$.
Let $\alpha \in (0, 2)$ and $\gamma (t)$ be the unique solution 
of the ordinary differential equation
\begin{align*}
\gamma'(t) +(\pi^2-\lambda)\pi^2 \gamma(t) +2 = \alpha
\end{align*}
with $\gamma(T)=0$, that is,
\begin{align} \label{eq-2.9-0507}
\gamma(t)=\frac{2-\alpha}{(\pi^2-\lambda)\pi^2}\left(e^{(\pi^2-\lambda)\pi^2(T-t)}-1 \right),  \ t\in [0,T].
\end{align}
It is easy to testify that $\gamma(t),  t\in [0,T]$ has all of the properties stated at the beginning of the proof.

By noting that $\alpha \in (0, 2)$ and using \eqref{eq-2.4-0501}, we easily  see that 
\begin{align}\label{eq-2.7-0502}
\int_0^t\frac{|Y^n(s)|_{-1}^2}{\gamma^2(s)}ds +\frac{|Y^n(t)|_{-1}^2}{\alpha\gamma(t)} \leq \frac{|x-y|_{-1}^2}{\alpha \gamma(0)}, \  t\in [0,  \sigma_k^n \wedge R].
\end{align}
Let us define the stochastic process $N(t),  t\in [0,  \sigma^n \wedge R]$ by
\begin{align}\label{eq-2.8-0502}
N(t) =&\exp \left(-\int_0^t  \left\langle \frac{B^{-1} \aleph(u^n(s) -w^n(s))}{\gamma(s)}, dW(s) \right\rangle \right. \\
& \qquad \qquad  \qquad -\left. \int_0^t \frac{|B^{-1} \aleph(u^n(t) -w^n(t))|^2}{2\gamma^2(s)} ds\notag
\right). 
\end{align}
Thanks to \eqref{eq-2.7-0502} and Lemma \ref{lem-2.2-0507}, we have  that for all $x, y\in {\bf H}^c$ and $t \in [0,  \sigma_k^n \wedge R]$
\begin{align}\label{eq-2.9-0502}
\int_0^t \frac{|B^{-1} \aleph(u^n(s) -w^n(s))|^2}{2\gamma^2(s)}ds
\leq \frac{|x-y|_{-1}^2}{2\alpha \gamma(0)}.
\end{align}

Let us now define the stochastic process $\overline{W}(t), \ t\in [0,  \sigma^n \wedge R]$ by
\begin{align} \label{eq-2.10-0502}
d\overline{W}(t) =dW(t)+ \frac{B^{-1} \aleph(u^n(t) -w^n(t))}{\gamma(t)}dt.
\end{align}
Then by the Novikov condition and the Girsanov theorem, 
we know that  $\overline{W}(t), t\in [0,  \sigma_k^n \wedge R]$ is  a cylindrical Wiener process on $L^2(0,1)$ under the probability measure $N(\sigma_k^n \wedge T)\P$.

By the definitions of $N(t)$ and $\overline{W}(t)$ and by noting \eqref{eq-2.9-0502}, we have
\begin{align*}
& \log N(t) \\ 
=& 
-\int_0^t  \left\langle \frac{B^{-1} \aleph(u^n(s) -w^n(s))}{\gamma(t)}, d \overline{W}(s) \right\rangle  
+ \int_0^t \frac{|B^{-1} \aleph(u^n(t) -w^n(t))|^2}{2\gamma^2(s)} ds 
\notag \\
\leq &  -\int_0^t  \left\langle \frac{B^{-1} \aleph(u^n(s) -w^n(s))}{\gamma(t)}, d \overline{W}(s) \right\rangle   
+ \frac{|x-y|_{-1}^2}{2\alpha \gamma(0)}, \ t \in [0,  \sigma_k^n \wedge R]. \notag
\end{align*}
Therefore, by taking the expectations of both sides of the above inequality with respect to $N(\sigma_k^n \wedge T)\P$, we obtain
\begin{align}\label{eq-3.13-0831}
\mathbb{E}\left[N(\sigma_k^n \wedge R)\log N(\sigma_k^n \wedge R) \right]  \leq \frac{|x-y|_{-1}^2}{2\alpha \gamma(0)}. 
\end{align}
Recalling that $R\in[0,T)$ is arbitrary, we have that $N(\sigma_k^n \wedge R), R\in[0,T)$ is uniformly integrable and 
\begin{align} \label{eq-2.11-0502}
\sup_{R\in [0,T)}\sup_{k, n\in \mathbb{N}}\mathbb{E} \left[N(\sigma_k^n \wedge R)\log N(\sigma_k^n \wedge R) \right] \leq  \frac{|x-y|_{-1}^2}{2\alpha \gamma(0)}.
\end{align}

Then by the martingale convergence theorem and the Doob optional sampling theorem, it follows that $N(t\wedge \sigma^n), t\in [0,T]$ is a martingale and by letting $k\to \infty$ in \eqref{eq-3.13-0831}, 
\begin{align} \label{eq-2.11-0502-1}
\sup_{n \in \mathbb{N}}\mathbb{E} \left[N(\sigma^n \wedge t)
\log N(\sigma^n \wedge t)  \right] 
\leq  \frac{|x-y|_{-1}^2}{2\alpha \gamma(0)}, \ t\in [0,T].
\end{align}
In addition, we known that  $(\overline{W}(t))$ is  a cylindrical Wiener process on $L^2(0,1)$ under the probability measure $\mathbb{Q}:=N(\sigma^n \wedge T)\P$
 up to time $\sigma^n \wedge R$.  By \eqref{eq-2.7-0502}, in fact we can show that 
for all $n \in \mathbb{N}$, $\sigma^n= T\ \mathbb{Q}$-$a.s.$ Indeed, since $(u^n(t))_{t \geq 0}$ is the global solution of \eqref{spde-2}, we see that
$\tau_l^n=\inf\{t\geq 0: |u^n(t)|_{-1}\geq l\}$ diverges to $\infty$ as 
$l\to \infty$. Noting that $\gamma(t)$ is decreasing with respect to  $t\in [0,T]$ and $$|Y^n(t\wedge \tau_{k}^n \wedge \sigma_{2k}^n)|_{-1}\geq k,$$ we have
\begin{align}\label{eq-3.16-190831}
\mathbb{E}^\mathbb{Q} \left[1_{\{\sigma_{2k}^n\leq t <\tau_k^n \}} \frac{|Y^n(t\wedge \tau_{k}^n \wedge \sigma_{2k}^n)|_{-1}^2}{ \gamma(t\wedge \tau_k^n \wedge \sigma_{2k}^n)}\right] 
\geq \frac{k^2}{\gamma(0)}\mathbb{Q}(\sigma_{2k}^n\leq t <\tau_k^n ).
\end{align}
On the other hand, by \eqref{eq-2.7-0502}, it is known that 
the left hand of \eqref{eq-3.16-190831} is bounded from above by 
$ \frac{|x-y|_{-1}^2}{\gamma(0)}.
$
Hence, letting  now  $k\to \infty$  in 
\eqref{eq-3.16-190831}, we  obtain
$$ \mathbb{Q}(\sigma^n \leq t) =0,\  t\in[0,T),$$ which clearly  implies  $\mathbb{Q}(\sigma^n = T)=1$.

Consequently, in the sequel, we can write $d\mathbb{Q}=N(T)d\P$ and
then we know that
$(\overline{W}(t))_{ t\in [0,T]}$ defined by \eqref{eq-2.10-0502} is 
a cylindrical Wiener process on $L^2(0,1)$ with respect to $\mathbb{Q}$. 

Using  the cylindrical Wiener process  $(\overline{W}(t))_{ t\in [0,T]}$, we easily see that the SPDE \eqref{eq-2.1-0501} can be rewritten as follows:
\begin{align} \label{eq-2.12-0502}
\left\{
\begin{aligned}
dw^n(t)= 
& -\frac{1}{2} A
\big\{ A w^n (t) -p_n(w^n(t)) +\lambda  w^n(t) \big\}dt  
+ Bd{\overline{W}}(t), \ \  t\in [0, T),\\
w^n(0) = &y \in {\bf H}^c,
\end{aligned}
\right.
\end{align}

Since under $\mathbb{Q}$,  $\overline{W}(t), t\in [0,T]$ is a cylindrical
 Wiener process on $L^2(0,1)$,  similarly to \eqref{spde-2}, we know
that \eqref{eq-2.12-0502} has global unique solution 
$w^n\in C([0, T]; H) \cap L^{2n+2}((0, T) \times (0,1))$. 
Moreover, the distribution of $w^n(t)$ under $\mathbb{Q}$ is same as that of 
$u^n(t; x)$ under $\P$ by the uniqueness in law of solutions.  
Therefore, by the equivalence of $\mathbb{Q}$ and $\P$, we know that \eqref{eq-2.1-0501}  also has the global solution up to time $T$.

From now on, we claim that   the coupling of  $\eqref{spde-2}$ and $\eqref{eq-2.1-0501}$ is made successfully up to time $T$. 
Let $\tau$ denote the coupling time, that is,
$$
\tau=\inf\{t\in [0,T]: u^n(t)=w^n(t)\ {\rm in}\  {\bf H}^c \}.
$$
with the convention $\inf \emptyset =\infty.$
Then we can show $\tau \leq T\ a.s.$ by contradiction. In fact, if 
$\tau(\omega) >T$, then 
$$
\inf_{t\in [0,T]} |u^n(t, \omega) - w^n(t, \omega)|_{-1}^2
$$
is  strictly positive, since both $u^n$ and $w^n$ are continuous 
stochastic processes with values in ${\bf H}^c$. Hence,
 we obtain that the integral of $\frac{|u^n(t,\omega) -w^n(t, \omega)|_{-1}^2}{\gamma(t)}$ on [0,T] diverges,  by noting that 
$\int_0^T\frac{1}{\gamma(t)}dt=\infty$. Therefore,  we have that on the set $\{\tau >T\}$, 
\begin{align}\label{eq-2.4-0502}
\int_0^T\frac{|Y^n(s)|_{-1}^2}{\gamma(s)}ds =\infty.
\end{align}
On the other hand, 
noting that $(\pi^2-\lambda)\pi^2 >0$, we obtain by \eqref{eq-2.3-0501}  that 
\begin{align*}
\int_0^t \frac{| Y^n(s)|_{-1}^2}{\gamma(s)}ds \leq \frac{|x-y|_{-1}^2}{2}, \  t\in [0,  \sigma_k^n \wedge R],
\end{align*}
which  contradicts with \eqref{eq-2.4-0502}.  Consequently, our
claim is proved.  In particular, we have $$w^n(T;y) =u^n(T;x)\ \ \mathbb{Q}\text{-}a.s.$$

Based on the above preparations, this theorem can be shown in 
the usual way \cite{Wan-13}. For the reader's convenience, we give the outline of the proof.  

\noindent {\it Step 2:} Let us formulate the proof of \eqref{eq-2.1-0504}. We first show for any $q>1$, 
\begin{align} \label{eq-3.19-190831}
\mathbb{E} [|N(t)|^q] \leq \exp \left\{\frac{(q-1)q|x-y|_{-1}^2}{ 2\alpha \gamma(0)} \right\},\ t\leq T.
\end{align}
 By  the definitions of $N(t)$ and $W(t)$, it follows easily that for any $q>1$, 
\begin{align}\label{eq-2.17-0505}
\mathbb{E} [|N(t)|^q] = &\mathbb{E}^{\mathbb{Q}}[|N(t)|^{q-1}]\\
                 =& \mathbb{E}^{\mathbb{Q}} 
\left[\exp \left\{-(q-1)\int_0^t  \left\langle \frac{B^{-1} \aleph(u^n(s) -w^n(s))}{\gamma(s)}, d{W}(s) \right\rangle \right. \right.    \notag  \\
& \qquad \qquad  \qquad -\left. \left.  (q-1)\int_0^t \frac{|B^{-1} \aleph(u^n(t) -w^n(t))|^2}{2\gamma^2(s)} ds      \right\}       \right] \notag  \\
 =& \mathbb{E}^{\mathbb{Q}} 
\left[\exp \left\{-(q-1)\int_0^t  \left\langle \frac{B^{-1} \aleph(u^n(s) -w^n(s))}{\gamma(s)}, d\overline{W}(s) \right\rangle \right. \right.    \notag  \\
& \qquad \qquad  \qquad +\left. \left.  (q-1)\int_0^t \frac{|B^{-1} \aleph(u^n(t) -w^n(t))|^2}{2\gamma^2(s)} ds      \right\}       \right], \ t\leq T.    \notag  
\end{align} 
By \eqref{eq-2.9-0502}, we have that 
\begin{align}\label{eq-2.18-0505}
\sup_{t\in[0,T]}\exp\left\{ \int_0^t \frac{|B^{-1} \aleph(u^n(s) -w^n(s))|^2}{2\gamma^2(s)} ds\right\} \leq
\exp \left\{\frac{(q-1)q|x-y|_{-1}^2}{2 \alpha \gamma(0)} \right\}. 
\end{align}
Note that 
\begin{align*}
U(t):=& \exp \left\{-(q-1)\int_0^t  \left\langle \frac{B^{-1} \aleph(u^n(s) -w^n(s))}{\gamma(s)}, d \overline{W}(s) \right\rangle \right. \\
& \qquad \qquad  \qquad \left.  -(q-1)^2\int_0^t \frac{|B^{-1} \aleph(u^n(s) -w^n(s))|^2}{2\gamma^2(s)} ds  \right\}, \ t\leq T \notag
\end{align*}
is an exponential martingale under ${\mathbb{Q}}$. Then, by 
\eqref{eq-2.18-0505}, we have 
\begin{align*}
\mathbb{E} [|N(t)|^q] = &\mathbb{E}^{\mathbb{Q}}
\left[U(t) \exp\left\{ \int_0^t \frac{(q-1)q|B^{-1} \aleph(u^n(s) -w^n(s))|^2}{2\gamma^2(s)} ds\right\} \right] \\
\leq &\mathbb{E}^{\mathbb{Q}}
\left[U(t) \sup_{t\in [0.T]}\exp\left\{ \int_0^t \frac{(q-1)q|B^{-1} \aleph(u^n(s) -w^n(s))|^2}{2\gamma^2(s)} ds\right\} \right] \notag \\
\leq & \exp \left\{\frac{(q-1)q|x-y|_{-1}^2}{2 \alpha \gamma(0)} \right\} \mathbb{E}^{\mathbb{Q}}[U(t)] \notag \\
= &\exp \left\{\frac{(q-1)q|x-y|_{-1}^2}{2 \alpha \gamma(0)} \right\},  \ t\leq T, \notag
\end{align*}
where
\eqref{eq-2.9-0502} has been used for the second inequality.
Therefore, the proof of \eqref{eq-3.19-190831} is completed.

Let us now formulate the proof \eqref{eq-2.1-0504}. According to the relation between $w^n(t;y)$ and $u^n(t;x)$, we have that for any $p>1$,  any $\phi \in B_b({\bf H})$ and any $x, y\in K\cap {\bf H}^c$
\begin{align*}
|P_T^{n,c}\phi|^p(y) = &  |\mathbb{E}^\mathbb{Q}[\phi(w^n(T;y))]|^p \\
=  & |\mathbb{E}^\mathbb{Q}[\phi(u^n(T;x))]|^p \\
=& |\mathbb{E}[N(T)\phi(u^n(T;x))]|^p \\
\leq &  \mathbb{E}[N(T)^{\frac{p}{p-1}}]^{p-1} P_T^{n,c}|\phi|^p(x)\\
\leq & P_T^{n,c}|\phi|^p(x)  \exp \left\{\frac{p|x-y|_{-1}^2}{2\alpha(p-1) \gamma(0)} \right\},
\end{align*}
where \eqref{eq-3.19-190831} with $q= \frac{p-1}{p}$ has been used for the last inequality.

Consequently,  we can  complete the proof of \eqref{eq-2.1-0504} by letting $\alpha =1$ and then $n \to \infty$ thanks to  Theorem \ref{thm-2.1-0506}.

\noindent {\it Step 3:} Let us finally give the proof \eqref{eq-2.2-0504} in brief. By the definition of $\mathbb{Q}$, the Young inequality \eqref{eq-1.27-0504} and the estimate \eqref{eq-2.11-0502-1}, it follows that 
\begin{align}\label{eq-2.15-0504}
P_T^{n,c}\log \phi(y)=&\mathbb{E}^{\mathbb{Q}} [\log\phi(w^n(T;y)) ] \\
                    =&\mathbb{E} [N(T)\phi(u^n(T; x)) ]  \notag\\
                    \leq & \mathbb{E} [N(T) \log N(T)] +\log \mathbb{E}[\phi(u^n(T;x))] \notag \\
\leq  & \frac{|x-y|_{-1}^2}{2 \alpha \gamma(0)} +\log P_T^{n,c} \phi(x). \notag
\end{align}
Recalling the representation of $\gamma$, see 
\eqref{eq-2.9-0507},  and minimizing the first term in  \eqref{eq-2.15-0504} with respect to $\alpha \in (0,2)$,  we see that
\begin{align*}
P_T^{n,c} \log \phi(y) \leq   \frac{ (\pi^2-\lambda)\pi^2 |x-y|_{-1}^2 }{2(e^{(\pi^2-\lambda)\pi^2 T} -1)} + \log P_T^{n,c}\phi(x).
\end{align*}
Now thanks to Theorem \ref{thm-2.1-0506}, we can easily  complete the proof of \eqref{eq-2.2-0504} with $t=T$
by letting $n \to \infty$ in the above inequality.  
\end{proof}

\vskip 0.3cm
\begin{rem}
If we consider $B=\frac{d}{d\theta}$ with ${\rm Dom}(B)= {H^1(0,1)}$ as that in the original paper \cite{DeGo-11}, then we can show the following equation  
\begin{align*}
|B^* (BB^*)^{-1} z|^2 =|z|_{-1}^2, \ z\in {\bf H}^0,
\end{align*}
by noting that $BB^* =-A$.  Thus, we can replace $B$ in the definition of $N(t)$, see \eqref{eq-2.8-0502}, by $B^*(BB^*)^{-1}$ and then obtain the 
same results  as those in Theorem  \ref{thm-2.1}.

In addition, the method used  in Theorem \ref{thm-2.1}  can be also
applied to the SPDE \eqref{sch-1}  with more general $B$ instead of 
$B=\frac{d}{d\theta}$ or $B=(-A)^\frac12$. In fact, if $BB^*$ is 
reversible restricted on $${\rm span}\{e_n: n=1,2,\cdots \}$$ and 
$$|B^*BB^* z|\leq C|z|_{-1}, z\in {\bf H}$$ for some $C>0$ and  {\rm (i), (ii), (iv)} in Theorem 
\ref{thm-2.1-0506} hold, then the Harnack equalities similar as those
in Theorem  \ref{thm-2.1} can be established. 
For example, if there exists a strictly positive sequence $\{b_n\}_{n=1}^\infty$ such that $Be_n=b_n e_n, n=1,2,\cdots$ and the sequence 
$\{n b_n^{-1}\}_{ n=1}^\infty$ is bounded, then $B$ satisfies the assumptions stated above.

\end{rem}

According to Theorem
1.4.1 \cite{Wan-13},  many important properties of $P_t^c$ can be 
deduced from Theorem \ref{thm-2.1}. For example, uniqueness of invariant probability measures can be easily known. As we stated in  Theorem \ref{thm-2.1-0506}, the existence and uniqueness of invariant  measures has been proved in  \cite{DeGo-11} by 
a different approach. 
Here, it is valuable to know that it can be reproved as the application of Harnack inequalities. Moreover, we
also know that $P_t^c$ is absolutely continuous with respect to its invariant 
measure $\nu^c$ and the following results hold for the density 
$p^c(t,x,y)$ of $P_t^c$ with respect to $\nu^c$.
\begin{cor} 
Under the assumptions of Theorem \ref{thm-2.1}, the following heat
 kernel inequalities are fulfilled  for any $t>0, x, y\in {\bf H}^c$ and $p>1$ 
\begin{align*}
 & \int_{ {\bf{H}}^c}p^c(t,x,z)\left\{\frac{p^c(t,x,z)}{p^c(t,y,z)} \right\}^{\frac1{p-1}}\nu^c(dz) \leq
\exp\left\{\frac{p (\pi^2-\lambda)\pi^2 |x-y|_{-1}^2 }{2(p-1)^2(e^{(\pi^2-\lambda)\pi^2 t} -1)} \right\},  
\\
 & \int_{{\bf{H}}^c}p^c(t,x,z)\log \frac{p^c(t,x,z)}{p^c(t,y,z)}\nu^c(dz)
 \leq 
 \frac{ (\pi^2-\lambda)\pi^2 |x-y|_{-1}^2 }{2(e^{(\pi^2-\lambda)\pi^2 t}
-1) } .
\end{align*}
\end{cor}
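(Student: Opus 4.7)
The plan is to derive both heat kernel estimates from the Harnack inequalities of Theorem \ref{thm-2.1} by the standard duality between Harnack inequalities and density bounds. Before beginning I would note that the existence of a strictly positive density $p^c(t,x,\cdot)$ of $P_t^c(x,\cdot)$ with respect to $\nu^c$ is itself a consequence of the Harnack with power established in Theorem \ref{thm-2.1}, which is what makes the formulations in the corollary meaningful in the first place.

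For the first inequality, I would start from the integral form of the power Harnack,
\begin{align*}
\Bigl(\int f(z)\,p^c(t,y,z)\,\nu^c(dz)\Bigr)^{p} \leq \Bigl(\int f^{p}(z)\,p^c(t,x,z)\,\nu^c(dz)\Bigr)\exp\{C_p(t)|x-y|_{-1}^2\},
\end{align*}
valid for every non-negative $f\in B_b({\bf H}^c)$, with $C_p(t)=\frac{p(\pi^2-\lambda)\pi^2}{2(p-1)(e^{(\pi^2-\lambda)\pi^2 t}-1)}$. Writing $\mu_x:=p^c(t,x,\cdot)\nu^c$ and $h:=d\mu_y/d\mu_x = p^c(t,y,\cdot)/p^c(t,x,\cdot)$, this reads $\int f h\,d\mu_x \leq (\int f^p d\mu_x)^{1/p}\exp\{C_p(t)|x-y|_{-1}^2/p\}$. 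Taking the supremum of the left-hand side over non-negative bounded $f$ via the standard $L^p$--$L^{p/(p-1)}$ duality gives
\begin{align*}
\Bigl(\int h^{p/(p-1)}\,d\mu_x\Bigr)^{(p-1)/p} \leq \exp\{C_p(t)|x-y|_{-1}^2/p\},
\end{align*}
so that $\int h^{p/(p-1)}d\mu_x\leq \exp\{C_p(t)|x-y|_{-1}^2/(p-1)\}$. Since this quantity equals $\int p^c(t,y,z)^{p/(p-1)}/p^c(t,x,z)^{1/(p-1)}\nu^c(dz)$, and since $C_p(t)/(p-1)$ is precisely the constant appearing in the corollary, interchanging the roles of $x$ and $y$ produces the first stated inequality.

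For the second (log) inequality, I would apply the log-Harnack part of Theorem \ref{thm-2.1} to the strictly positive bounded test function $\phi_M(z):=(p^c(t,y,z)/p^c(t,x,z))\wedge M$. Since $P_t^c\phi_M(x)\to \int p^c(t,y,z)\,\nu^c(dz)=1$ as $M\to\infty$, the term $\log P_t^c\phi_M(x)$ tends to $0$; meanwhile $P_t^c\log\phi_M(y)\to \int p^c(t,y,z)\log(p^c(t,y,z)/p^c(t,x,z))\,\nu^c(dz)$. This yields the relative-entropy bound with $x$ and $y$ exchanged, and a final swap gives the stated inequality with constant $\frac{(\pi^2-\lambda)\pi^2|x-y|_{-1}^2}{2(e^{(\pi^2-\lambda)\pi^2 t}-1)}$.

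The only step requiring real care is the truncation/approximation argument, because the natural test functions $h^{1/(p-1)}$ and $\log h$ need not be bounded. For the power inequality one works with $f_M:=(h^{1/(p-1)}\wedge M)$ and lets $M\to\infty$ by monotone convergence in order to realize the $L^{p/(p-1)}$ duality for possibly unbounded $h$; for the log-Harnack one splits $\log\phi_M$ into positive and negative parts and invokes the integrability already guaranteed by the power inequality just established. Apart from this technicality everything reduces to algebra on the exponential constants from Theorem \ref{thm-2.1}.
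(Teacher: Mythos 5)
Your proposal is correct and is essentially the paper's approach: the paper proves nothing here beyond invoking Theorem 1.4.1 of [Wan-13], and your argument (existence and equivalence of the densities from the power-Harnack inequality, the $L^{p}$--$L^{p/(p-1)}$ duality for the first bound, and the truncated test function $\phi_M=(p^c(t,y,\cdot)/p^c(t,x,\cdot))\wedge M$ with $P_t^c\phi_M(x)\to 1$ for the entropy bound) is exactly the standard proof of that cited theorem, with the constants $C_p(t)/(p-1)$ matching those stated.
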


%%%%%%%%%%%%%%%%%%%%%%%%%%%%%%%%%%%%%%%%%%%
%%
%%%%%%%%%%%%%%%%%%%%%%%%%%%%%%%%%%%%%
\vskip 0.5cm
\begin{ackn}

L. Gouden\`ege was supported in part by the French National Research Agency (ANR) as leader of the SIMALIN project ANR-19-JCJC, and  B. Xie was supported in part by
Grant-in-Aid for Scientific Research (C) 16K05197 from Japan Society for the Promotion of Science (JSPS). 
The authors warmly thank the  referee for  invaluable comments and suggestions, which led to significant improvement in the presentation of the paper
\end{ackn}

\bibliographystyle{plain}

\end{document}